\theoremstyle{plain}
\newtheorem{thm}{Theorem}[section]
\newtheorem{lem}[thm]{Lemma}
\theoremstyle{definition}
\newtheorem{rem}[thm]{Remark}
\newtheorem{exmp}[thm]{Example}
\newcommand{\R}{\ensuremath{\mathbb{R}}}
\newcommand{\Z}{\ensuremath{\mathbb{Z}}}
\newcommand{\C}{\ensuremath{\mathbb{C}}}
\newcommand{\bp}{\ensuremath{\mathbb{P}}}
\newcommand{\N}{\ensuremath{\mathbb{N}}}
\newcommand{\cl}{\ensuremath{\mathcal{L}}}
\newcommand{\wq}{\ensuremath{\infty}}
\newcommand{\vp}{\ensuremath{\varphi}}
\newcommand{\wl}[1]{\ensuremath{\overline{#1}}}
\newcommand{\wt}[1]{\ensuremath{\widetilde{#1}}}
\newcommand{\p}{\ensuremath{\partial}}
\newcommand{\na}{\ensuremath{\nabla}}
\newcommand{\lc}{\ensuremath{\left(}}
\newcommand{\rc}{\ensuremath{\right)}}
\newcommand{\hk}{hyperk\"ahler}
\newcommand{\ms}{\ensuremath{\mathbb{S}}}
\newcommand{\id}{\ensuremath{\text{Id}}}
\newcommand{\ii}{{\ensuremath{\rm{i}}}}
\newcommand{\norm}[2]{{ \ensuremath{\left\|} #1 \ensuremath{\right\|}}_{#2}}
\newcommand{\he}{{\ensuremath{\bm{e}}}}
\newcommand{\df}{{\ensuremath{\mathrm{d}}}}
\title{Complex structures of the Gibbons-Hawking ansatz with infinite topological type}
\author{Wenxin He \and Bin Xu}
\date{}
\begin{document}

\maketitle
\begin{abstract}
In this paper, we study the complex structures of complete \hk\ four-manifolds of infinite topological type arising from the Gibbons-Hawking ansatz. We show that for almost all complex structures in the \hk\ family, the manifold is biholomorphic to a hypersurface in $\C^3$ defined by an explicit entire function. For the remaining complex structures, we further prove that the manifold is biholomorphic to the minimal resolution of a singular surface in $\C^3$ under certain conditions. Thus, we partially extend LeBrun's celebrated work \cite{lebrun1991complete} to the context of countably many punctures.
\end{abstract}

\textbf{Keywords. }{Gibbons-Hawking ansatz, infinite topological type, complex structures}

\section{Introduction}\label{section-introduction}
In their pioneering work, Gibbons and Hawking \cite{gibbons1979classification, GIBBONS1978430} introduced a construction of four-dimensional \hk\ metrics with a tri-Hamiltonian $S^1$-action, now known as the Gibbons-Hawking ansatz. Hitchin \cite{hitchin1979polygons}, through the approach of twistor space, developed a coherent picture of the \hk\ geometry behind these constructions. Kronheimer \cite{kronheimer1993construction} described four-dimensional asymptotically locally Euclidean spaces arising as finite-dimensional \hk\ quotients. Bielawski \cite{bielawski1999complete} analyzed complete \hk\ manifolds with local tri-Hamiltonian actions and general dimension, placing a wide range of examples into a unified framework. In these works, the relevant configurations of puncture points are finite, and the resulting manifolds are of finite topological type. On the other hand, Anderson, Kronheimer and LeBrun \cite{anderson1989complete} used Gibbons-Hawking metrics to construct complete Ricci-flat \hk\ manifolds with infinite topological type. Goto \cite{goto1994hyper} later constructed \hk\ manifolds with infinite topological type and general dimension from the viewpoint of infinite dimensional \hk\,quotients, closely related to Kronheimer's construction \cite{kronheimer1993construction} of \hk\ manifolds of type $A_k$. Subsequent work by Hattori \cite{hattori2011volume, hattori2014holomorphic} studied the volume growth and holomorphic symplectic structures of these type $A_\wq$ manifolds. Swann \cite{swann2016twists} provided a classification of complete four-dimensional \hk\ manifolds that admit a tri-Hamiltonian $S^1$-action, without imposing any assumptions on their topological type. Dancer and Swann \cite{dancer2017hypertoric, DANCER2019168} obtained further developments for general-dimensional \hk\ manifolds of infinite topological type with a tri-Hamiltonian torus action.

In this manuscript, inspired by the works  \cite{anderson1989complete, lebrun1991complete, kalafat2009hyperk, ChenChen+2019+259+284}, we investigate the complex structures of complete four-dimensional \hk\ manifolds $M$ of infinite topological type obtained from the Gibbons-Hawking ansatz. Our first result (Theorem~\ref{uniV}) gives an existence criterion for the harmonic functions in the Gibbons-Hawking ansatz for such manifolds. The corresponding existence statement was already proved in the work of Kalafat and Sawon \cite{kalafat2009hyperk} by an argument based on the maximum principle. We recover the criterion by a different method, using potential theory. Building on the method in LeBrun \cite{lebrun1991complete}, we show that for almost all complex structures in the \hk\ family on $M$, the underlying complex manifold is biholomorphic to a hypersurface in $\C^3$ defined by an explicit entire function (Theorem~\ref{almostc}). LeBrun's work treats the case where the circle action has finitely many fixed points and our description extends his picture to the countably many fixed points. For the remaining complex structures, which form a set of measure zero, we prove that under an additional finiteness condition, the complex manifold $M$ can be realized as the minimal resolution of a singular hypersurface in $\C^3$ (Theorem~\ref{vmr}). In this way we relate the complex structures arising from the Gibbons–Hawking ansatz with infinite topological type directly to the geometry of the puncture set. We also note that Hattori \cite{hattori2014holomorphic} proved a similar classification theorem for four-dimensional hyperkähler manifolds of type $A_\infty$.

\subsection{Background of Gibbons-Hawking ansatz}\label{backgrounds}
In this subsection we recall the Gibbons-Hawking construction and fix the particular \hk\ manifold of infinite topological type that will be studied throughout the paper.

Let $U\subset\R^3$ be an open subset with coordinates $(x,y,z)$ and let $\pi_U:M_U\to U$ be a principal $S^1$-bundle. $V$ is a positive harmonic function defined on $U$ such that $\frac{1}{2\pi}\star \df V$ represents the Chern class of this bundle, where $\star$ is the Hodge star operator with respect to the Euclidean metric $g_{\text{Euc}}$ on $\R^3$. Then there is a connection $1$-form $\omega$ whose curvature is $\star {\df}V$. The Gibbons-Hawking ansatz then produces a \hk\ metric on $M_U$:
\begin{align*}
  g=\frac{1}{V}\omega^2+V\pi_U^*g_{\text{Euc}}.
\end{align*}
The corresponding K\"ahler form and complex structures are
\begin{align*}
  &\Omega_x=\df x\wedge \omega+V {\df}y\wedge {\df}z,\ J_x:{\df}x\mapsto V^{-1}\omega,\ {\df}y\mapsto {\df}z\\
  &\Omega_y={\df}y\wedge \omega+V {\df}z\wedge {\df}x,\ J_y:{\df}y\mapsto V^{-1}\omega,\ {\df}z\mapsto {\df}x\\
  &\Omega_z={\df}z\wedge \omega+V {\df}x\wedge {\df}y,\ J_z:{\df}z\mapsto V^{-1}\omega,\ {\df}x\mapsto {\df}y,
\end{align*}
and these satisfy the quaternionic relations
\begin{align*}
  J_x^2=J_y^2=J_z^2=-\id,\ J_xJ_y=-J_yJ_x=J_z.
\end{align*}
Thus $g$ is a \hk\ metric with a tri-Hamiltonian $S^1$-action generated by the circle fibres.

We now specialize to the class of Gibbons-Hawking metrics of infinite topological type introduced by Anderson-Kronheimer-LeBrun \cite{anderson1989complete}. Consider a divergent sequence of distinct points $p_j=(x_j,y_j,z_j)\in\R^3,\ j\in\N^+$ and set $A=\{p_j:j\in\N^+\},\ U=\R^3\setminus A$. Let $\pi_0:M_0\to \R^3\setminus A$ be a principal $S^1$-bundle whose first Chern class has value $-1$ on a small sphere around each puncture $p_j$. More precisely, if $S_j^2$ is a sphere of sufficiently small radius enclosing only the point $p_j$, then the first Chern class $c_1$ of $\pi_0$ satisfies
\begin{align*}
  \int_{S^2_j}c_1=-1\text{  for every }j\in\N^+.
\end{align*}

Choose $r_j>0$ such that $r_j<\min_{k\ne j}||p_k-p_j||$ so that $B_{r_j}(p_j)\setminus\{p_j\}$ contains no other punctures. Then $\pi_0^{-1}(B_{r_j}(p_j)\setminus\{p_j\})$ is diffeomorphic to a punctured 4-ball $\hat{B_j}-\{0\}\subset\R^4$. We compactify $M_0$ by attaching back the centres of these $4$-balls and define
\begin{align*}
  M:=M_0\cup\ \bigcup_{j=1}^\wq \hat{B_j}/\sim
\end{align*}
where $\sim$ represents the identification between $\hat{B_j}-\{0\}$ and $\pi_0^{-1}(B_{r_j}(p_j)\setminus\{p_j\})$. Then $\pi_0:M_0\to\R^3$ can be extended to a smooth map $\pi:M\to\R^3$ for which $\{p_j\}$ are the fixed points of the $S^1$-action. 

Assume that the puncture points satisfy the summability condition
\begin{align}\label{convergence}
  \sum_{j=2}^\infty\frac{1}{\norm{p_1-p_j}{}}<\wq.
\end{align}
Then the function
\begin{align}\label{funcV}
  V(p):=\frac12\sum_{j=1}^\wq\frac{1}{\norm{p-p_j}{}}
\end{align}
is a well-defined smooth harmonic function on $\R^3\setminus\{p_j\}$. One checks that the closed $2$-form $\frac{1}{2\pi}\star {\df}V$ is a representative element of the Chern class $(-1,-1,\cdots)$ of the principal $S^1$-bundle. Hence there is a connection on $\pi_0$ with curvature $\star {\df}V$. Let $\omega$ denote the connection form of this connection. The Gibbons-Hawking metric
\begin{align*}
  g=\frac{1}{V}\omega^2+V\pi_0^{*}g_{\text{Euc}}
\end{align*}
on $M_0$ extends smoothly across the added $S^1$-fixed points $\pi^{-1}(p_j)\subset M$, and Anderson-Kronheimer-LeBrun \cite{anderson1989complete} showed that the resulting metric $g$ on $M$ is complete and \hk, while $M$ has infinite topological type. For simplicity, we call this harmonic function $V$ the {\it Gibbons-Hawking potential}.

\subsection{Main results}\label{main}
In this section, we present the main results derived from the \hk\ manifolds of infinite topological type constructed in the previous subsection.

Throughout, 
\begin{itemize}
  \item Let $\{p_j\}_{j=1}^\wq\subset\R^3$ be a closed, discrete and countable set of points, with $p_j=(x_j,y_j,z_j)$. We take $A:=\{p_j:j\in\N^+\}$ and $U:=\R^3\setminus A$. Then
\[
H^2(U,\Z)=\prod_{j=1}^\wq\Z.
\]
We write a cohomology class $\he\in H^2(U,\Z)$ as $\he=(e_1,e_2,\cdots)$.
  \item Let $\ms^2\subset \R^3$ be the unit sphere 
    \begin{align*}
        \ms^2:=\{(x,y,z)\in \R^3: x^2+y^2+z^2=1\}.
    \end{align*}
    We denote the standard measure on $\ms^2$ by $m$. For a direction $v\in \ms^2$, we define the complex structure in the \hk\ family of $M$ by
    \begin{align*}
        J_v:=v_1J_x+v_2J_y+v_3J_z.
    \end{align*}
    \item Let $\Pi_v$ be the orthogonal projection from $\R^3$ onto the plane $v^\perp$ orthogonal to $v$. Define $a_j(v):=\Pi_v(p_j)$ and let $\{b_1(v),b_2(v),\cdots\}$ be the set of distinct values of $\{a_j(v)\}$.
\end{itemize}

Our first result is an existence criterion for the positive harmonic functions which arise as Gibbons-Hawking potential on $U$.
\begin{thm}\label{uniV}
  For $U=\R^3\setminus A$ where $A=\{p_j:j\in\N^+\}\subset \R^3$ is a closed, discrete and countable set, a nontrivial cohomology class 
  \begin{align*}
    \bm{e}=(e_1,e_2,\cdots)\in H^2(U,\Z)
  \end{align*}
  can be represented in the form of $[\star {\df}V_\he]$ where $V_\he$ is some positive harmonic function on $U$, if and only if $e_j\le0$ for all $j\in\N^+$ and there exists a point $x\in U$ such that
  \begin{align*}
  \sum_{j\ge1}\frac{|e_j|}{|x-p_j|}<\wq.
  \end{align*}
\end{thm}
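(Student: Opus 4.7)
The plan is to prove the two directions separately: sufficiency is a direct construction of $V_{\bm e}$ as a Newtonian potential, while necessity combines Bôcher's theorem on isolated singularities of non-negative harmonic functions with a mean value computation that bounds the singular strengths of $V$ by its value at a single base point via Newton's shell theorem.

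For sufficiency, given $e_j \le 0$ for every $j$ and $x \in U$ with $\sum_j |e_j|/|x - p_j| < \infty$, I would set
\[
V_{\bm e}(p) := \frac{1}{2}\sum_{j=1}^{\infty}\frac{|e_j|}{|p - p_j|}.
\]
A simple majorization, namely that $|p - x|\le R$ and $|p_j - x|\ge 2R$ force $|p - p_j| \ge |p_j - x|/2$, promotes convergence at $x$ to local uniform convergence on compact subsets of $U$, so $V_{\bm e}$ is a positive harmonic function on $U$. The distributional identity $\Delta(1/|p - p_j|) = -4\pi\delta_{p_j}$ applied on a small sphere about each puncture gives $\frac{1}{2\pi}\int_{S_j^2}\star\df V_{\bm e} = e_j$, so $[\star\df V_{\bm e}] = \bm e$.

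For necessity, let $V$ be positive harmonic on $U$ with $[\star\df V] = \bm e$. Bôcher's theorem provides, near each $p_j$, an expansion $V(p) = c_j/|p - p_j| + h_j(p)$ with $c_j \ge 0$ and $h_j$ harmonic on a neighbourhood of $p_j$; matching the Chern integral over a small sphere yields $c_j = |e_j|/2$ and in particular $e_j = -2c_j \le 0$. Now fix $x_0 \in U$, and for large $R$ such that $\partial B_R(x_0)$ avoids the discrete set $A$ set
\[
h_R(p) := V(p) - \frac{1}{2}\sum_{p_j \in B_R(x_0)}\frac{|e_j|}{|p - p_j|},
\]
which is harmonic on $B_R(x_0)$ by the local expansion above. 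Applying the mean value property to $h_R$ at $x_0$ and using Newton's shell theorem, the spherical mean of $1/|{\cdot} - p_j|$ over $\partial B_R(x_0)$ being $1/R$ whenever $p_j \in B_R(x_0)$, produces the identity
\[
V(x_0) - M_V(R) = \frac{1}{2}\sum_{p_j \in B_R(x_0)}|e_j|\left(\frac{1}{|x_0 - p_j|} - \frac{1}{R}\right),
\]
where $M_V(R) \ge 0$ is the spherical mean of $V$ on $\partial B_R(x_0)$. The right-hand side is therefore bounded uniformly in $R$ by $V(x_0)$; each summand is non-negative and non-decreasing in $R$, with limit $|e_j|/|x_0 - p_j|$ as $R \to \infty$, so monotone convergence yields $\sum_j |e_j|/|x_0 - p_j| \le 2V(x_0) < \infty$.

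I expect the main obstacle to be the bookkeeping in the necessity step: because $V$ is not harmonic on $B_R(x_0)$, the mean value identity must be applied to the harmonic correction $h_R$ and the Newtonian poles reinserted via Newton's shell theorem, which is precisely the device that converts a classical mean value identity into a quantitative control on partial sums of singular strengths. Monotone convergence then upgrades the partial-sum bound to summability of the full series, which is the content of the theorem.
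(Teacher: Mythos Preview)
Your proof is correct. The sufficiency direction and the sign condition $e_j\le 0$ via B\^ocher's theorem are essentially the same as in the paper (the paper phrases the locally uniform convergence as an application of Harnack's theorem for increasing sequences rather than your explicit tail majorization, but these are interchangeable).

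Where you genuinely diverge is in extracting the summability condition. The paper proceeds through potential theory: it extends $-V_{\bm e}$ to a subharmonic function on $\R^3$, identifies its Riesz measure as $\mu=\sum_j|e_j|\,\delta_{p_j}$ using B\^ocher locally, and then invokes a structure theorem from Hayman--Kennedy stating that a Radon measure on $\R^3$ arises as the Riesz measure of a subharmonic function bounded above if and only if $\int_1^\infty n(t)\,t^{-2}\,\df t<\infty$, where $n(t)=\mu(\overline{B(0,t)})$. Since $-V_{\bm e}<0$ is bounded above, the integral converges; an explicit evaluation then shows this is equivalent to $\sum_j|e_j|/|p_j|<\infty$. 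Your route is more elementary and self-contained: subtracting the finitely many poles inside $B_R(x_0)$ to obtain a genuinely harmonic $h_R$, applying the mean value property, and using Newton's shell theorem to evaluate the spherical means of the Newtonian kernels gives the clean identity you wrote, and positivity of $V$ does the rest. This avoids the Riesz machinery entirely and even yields the quantitative bound $\sum_j|e_j|/|x_0-p_j|\le 2V_{\bm e}(x_0)$, which the paper's argument does not make explicit. The paper's approach, on the other hand, situates the result within classical potential theory and makes transparent that the obstruction is precisely the growth condition on the Riesz measure.
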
 

\begin{rem}
In the concrete Gibbons-Hawking set-up in Section~\ref{backgrounds}, the principal $S^1$-bundle $\pi_0:M_0\to\R^3\setminus\{p_j\}_{j=1}^\wq$ is chosen so that its first Chern class takes the value $-1$ on a small sphere enclosing each puncture $p_j$. Equivalently, the corresponding cohomology class $e=(e_1,e_2,\cdots)$ is fixed by the condition $e_j=-1$ for all $j\in\N^+$. Then we obtain a positive harmonic function $V_\he$ on $U=\R^3\setminus\{p_j\}_{j=1}^\wq$ of the form
\begin{align*}
  V_\he(x)=\sum_{j=1}^\wq\frac{1}{4\pi|x-p_j|}
\end{align*}
so that $[\star {\df}V_\he]=e$. This is exactly the harmonic function appearing in \eqref{funcV} since $V=2\pi V_\he$. In particular, the requirement $e_j=-1$ for all $j$ ensures that the resulting \hk\ metric extends smoothly across the added $S^1$-fixed points over $\{p_j\}_{j=1}^\wq$, producing the complete \hk\ manifold $M$ of infinite topological type described in Section~\ref{backgrounds}.
\end{rem}

\begin{thm}\label{almostc}
    Assume that the punctures $\{p_j\}$ satisfy
    \begin{align*}
        \sum_{j=2}^\wq \frac{1}{||p_1-p_j||}<\infty.
    \end{align*}
    Then there exists a subset $B$ of $\mathbb{S}^2$ with $m(\ms^2\setminus B)=0$ such that for every $v\in B$, $(M,J_v)$ is biholomorphic to a hypersurface in $\C^3$ defined by 
    \begin{align*}
        u_1u_2=P_v(u_3)
    \end{align*}
    where $P_v$ is an entire function determined by $\{p_j\}$ and $v$.
\end{thm}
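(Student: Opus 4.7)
The plan is to follow LeBrun's strategy in \cite{lebrun1991complete}, adapted to the countably-many-punctures setting. First I would isolate a subset $B \subset \ms^2$ of full measure along whose directions the projections $\{a_j(v)\}$ behave well; namely, the $a_j(v)$ should be pairwise distinct and form a locally finite subset of $v^\perp \cong \C$. Distinctness fails only on the countable union of pairs $\{\pm(p_j-p_k)/\norm{p_j-p_k}{}\}$, which is $m$-null. Local finiteness fails exactly when some disk $D_R \subset v^\perp$ contains infinitely many $a_j(v)$; since $|\Pi_v(p_j)| = \norm{p_j}{}\sin\theta_j$ where $\theta_j$ is the angle between $v$ and $p_j$, one has $m(\{v : |\Pi_v(p_j)|<R\}) \lesssim R^2/\norm{p_j}{}^2$ for large $\norm{p_j}{}$. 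The summability hypothesis implies $\sum_j 1/\norm{p_j}{}^2<\wq$, so $\sum_j m(\{v : a_j(v) \in D_R\})<\wq$ for every $R$, and a Borel--Cantelli argument gives local finiteness for $m$-almost every $v$. Taking $B$ to be the intersection of these two full-measure sets handles the ``almost every direction'' clause.

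For each $v \in B$ the set $\{a_j(v)\}$ is a countable discrete subset of $\C$ with $|a_j(v)| \to \wq$, so Weierstrass factorization produces an entire function $P_v$ with simple zeros precisely at $\{a_j(v)\}$. Following LeBrun, I would then construct three global holomorphic functions on $(M, J_v)$. The function $u_3$ is the composition $\Pi_v \circ \pi$ with $v^\perp$ identified with $\C$; this is $J_v$-holomorphic because it is the complex moment map of the $S^1$-action with respect to the holomorphic symplectic form compatible with $J_v$. The functions $u_1$ and $u_2$ are to be global holomorphic functions whose zero divisors are $\bigcup_j \pi^{-1}(\ell_j^+)$ and $\bigcup_j \pi^{-1}(\ell_j^-)$ respectively, where $\ell_j^\pm$ denotes the open half-line from $p_j$ in direction $\pm v$. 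In LeBrun's finite-puncture case these are explicit finite products of local holomorphic sections near each puncture; in the infinite-puncture case one must insert Weierstrass-type convergence factors so that the infinite product defines a global holomorphic function on $M$, and the summability assumption on $\{p_j\}$ is exactly what makes this possible.

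Once $u_1, u_2, u_3$ are constructed, both $u_1 u_2$ and $P_v(u_3)$ are global holomorphic functions on $M$ with the same simple zero divisor $\pi^{-1}\bigl(\bigcup_j L_j\bigr)$, where $L_j$ is the full line through $p_j$ parallel to $v$. After suitably normalizing $u_1$ and $u_2$, their ratio $u_1 u_2 / P_v(u_3)$ is a nowhere-zero holomorphic function on $M$, and a growth comparison or matching at a reference point shows it equals $1$. The map $\Phi := (u_1, u_2, u_3) : M \to \C^3$ then lands in $\{u_1u_2=P_v(u_3)\}$, which is smooth since $P_v$ has only simple zeros. Biholomorphicity of $\Phi$ reduces to properness together with injectivity and local biholomorphicity, which are verified by examining the fibers of $u_3$: the regular fibers are $\C^\times$-orbits, while the singular fibers over $a_j(v)$ consist of two transverse copies of $\C$ meeting at $\pi^{-1}(p_j)$, and these match the fibers of the target hypersurface. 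I expect the main obstacle to be the construction of $u_1$ and $u_2$ as convergent global holomorphic functions in the infinite-puncture setting; this analytic difficulty does not appear in LeBrun's finite case and depends crucially on the summability condition.
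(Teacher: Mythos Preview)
Your measure-theoretic argument in the first paragraph is correct and matches the paper almost verbatim: distinctness of the $a_j(v)$ fails only on the countable set $\{\pm(p_i-p_j)/\|p_i-p_j\|\}$, and local finiteness follows from Borel--Cantelli applied to the spherical caps $\{v:|\Pi_v(p_j)|\le n\}$, whose measures are summable in $j$ because $\sum_j\|p_j\|^{-2}<\infty$.

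The gap is in your construction of the biholomorphism. You propose to build $u_1,u_2$ directly as global holomorphic functions on $M$ by taking ``infinite products of local holomorphic sections near each puncture,'' corrected with Weierstrass-type factors. But this is not a well-defined operation on the four-manifold $M$: the local Morse coordinates near distinct fixed points $q_j$ live in disjoint charts and have no canonical product, and Weierstrass factors are functions of one variable on the base $\C$, not on $M$. The summability hypothesis controls the convergence of $P_v$ on $\C$ and of the potential $V$; it does not by itself manufacture global holomorphic functions on $M$ with a prescribed infinite divisor. Your description of LeBrun's finite case as ``explicit finite products'' is also not how LeBrun proceeds---he uses the same bundle-theoretic mechanism that the paper adapts.

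The paper's key lemma fills this gap structurally. One covers $M_0=M\setminus\{q_j\}$ by two open sets $M^\pm$, each of which $H$ exhibits as a holomorphic principal $\C^*$-bundle over $\C$. Since $\C$ is Stein and $H^2(\C,\Z)=0$, these bundles are topologically trivial, and the Oka--Grauert principle upgrades this to holomorphic triviality $M^\pm\cong\C\times\C^*$ with fiber coordinate $v_\pm$. A complex Morse-lemma analysis near each $q_j$ then shows the transition $v_+/v_-$ has a simple pole at each $a_j$, so modulo a nonvanishing entire factor it may be taken to be $P(u)$. The map $\chi(u,v_+)=(P(u)/v_+,\,v_+,\,u)$ is then automatically well-defined on $M_0$, lands in $\{u_1u_2=P(u_3)\}$ by construction, and extends across the fixed points by Hartogs. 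The global functions $u_1,u_2$ you want are thus \emph{outputs} of the Oka--Grauert trivialization, not independent inputs; your proposal needs this step, or an equivalent, before the rest of the argument can proceed.
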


When $v\notin B$, $M$ can no longer be described directly as a hypersurface in $\C^3$. Nevertheless, under an additional finiteness assumption on the multiplicities of the projection points, $(M,J_v)$ can be realized as the minimal resolution of a singular hypersurface.

\begin{thm}\label{vmr}
    Assume that the punctures $\{p_j\}$ satisfy
    \begin{align*}
        \sum_{j=2}^\wq \frac{1}{||p_1-p_j||}<\infty.
    \end{align*}
    If $v\in \ms^2\setminus B$ is a direction satisfies the following conditions:
    \begin{itemize}
        \item [(1).] The set $\{b_k(v)\}_{k=1}^\wq$ has no accumulation point on $\C$;
        \item [(2).] For each $k\ge1$, the following multiplicities are finite:
        \begin{align*}
          &m_0(v):=\#\{j\in\mathbb{N}^+:a_j(v)=0\}<\wq,\\
          &m_k(v):=\#\{j\in\mathbb{N^+}:a_j(v)=b_k(v)\}<\infty.
        \end{align*}
    \end{itemize}
    Then $(M,J_v)$ is biholomorphic to the minimal resolution of a surface in $\C^3$ defined by 
    \begin{align*}
        u_1u_2=P_v(u_3)
    \end{align*}
where $P_v$ is an entire function determined by $\{p_j\}$ and $v$.
In particular, the surface has at most countably many orbifold singularities
of type $A_{m_k(v)-1}$ for each $m_{k(v)}$ greater than one.
\end{thm}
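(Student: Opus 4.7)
The plan is to extend the construction underlying Theorem~\ref{almostc} to the exceptional directions $v\in\ms^2\setminus B$. Hypotheses (1) and (2) are exactly the conditions required to produce an entire function $P_v$ with the prescribed zero divisor and to describe each singular fibre of the projection $\pi_v:=\Pi_v\circ\pi:M\to v^\perp\cong\C$ as an isolated Kleinian singularity on the target. First I would define $P_v$: since $\{b_k(v)\}_{k\ge 1}$ is discrete in $\C$ by (1) and each multiplicity $m_k(v)$ is finite by (2), a Weierstrass product of the same genus as the one used in Theorem~\ref{almostc} (with convergence coming from $\sum_{j\ge 2}\|p_1-p_j\|^{-1}<\wq$) yields an entire function $P_v$ whose zero set is exactly $\{b_k(v)\}$ with $\operatorname{ord}_{b_k(v)}P_v=m_k(v)$.

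Next I would construct the map $F=(u_1,u_2,u_3):M\to\C^3$. Fixing an orthonormal frame with $v$ as the third vector and identifying $v^\perp$ with $\C$, the composition $u_3:=\pi_v$ is a $J_v$-holomorphic function on $M$. Following LeBrun, the remaining two coordinates $u_1,u_2$ are $S^1$-equivariant holomorphic functions of weights $\pm 1$, built by integrating a suitable complex connection $1$-form along paths in $\R^3\setminus A$ and exponentiating; global convergence uses the same summability estimates as in Theorem~\ref{almostc}. The divisors of $u_1$ and $u_2$ inside the singular fibre $\pi_v^{-1}(b_k(v))$ each have multiplicity $m_k(v)$, so $u_1u_2$ and $P_v(u_3)$ are $J_v$-holomorphic functions on $M$ with identical divisor; after absorbing a nowhere-vanishing factor into the choice of $u_1,u_2$ this forces the identity $u_1u_2=P_v(u_3)$, and $F$ factors through the surface $X:=\{u_1u_2=P_v(u_3)\}\subset\C^3$.

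I would then verify that $F:(M,J_v)\to X$ is the minimal resolution. The surface $X$ is smooth away from the points $(0,0,b_k(v))$ with $m_k(v)\ge 2$; near such a point its local equation is $u_1u_2=u_3^{m_k(v)}$, which is the Kleinian singularity of type $A_{m_k(v)-1}$. In the Gibbons-Hawking model, for each pair of punctures $p_i,p_j$ that are consecutive along the line through $b_k(v)$ parallel to $v$, the preimage in $M$ of the segment joining them is an $S^1$-equivariantly compactified $2$-sphere of self-intersection $-2$; the $m_k(v)$ punctures in the fibre therefore produce a transverse chain of $m_k(v)-1$ such $(-2)$-spheres, which is exactly the exceptional divisor of the minimal resolution of an $A_{m_k(v)-1}$ singularity. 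Since $F$ is holomorphic, surjective, and an isomorphism away from these fibres while contracting precisely this chain, it realises $(M,J_v)$ as the minimal resolution of $X$.

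The main obstacle is controlling the global convergence of the functions $u_1,u_2$ simultaneously with the Weierstrass product for $P_v$, which parallels the potential-theoretic estimates behind Theorem~\ref{uniV}; condition (1) prevents the product from acquiring essential singularities on $\C$ and condition (2) keeps each singular fibre isolated. A secondary point is ruling out any additional exceptional curves so that the resolution is genuinely minimal; this follows from the tri-Hamiltonian $S^1$-symmetry, which forces every exceptional curve to be an $S^1$-invariant $(-2)$-sphere already belonging to the chains identified above.
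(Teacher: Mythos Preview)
Your route differs from the paper's. You propose to construct global holomorphic functions $u_1,u_2$ on $M$ directly, in the spirit of LeBrun's explicit finite-centre formulas, and then match divisors to force $u_1u_2=P_v(u_3)$. The paper instead works chart-wise: it covers $M_0$ by $\C^*$-invariant open sets $M^-,M^+,M_{k,l}$ (one $M_{k,l}$ for each open segment between consecutive punctures lying over $b_k(v)$), invokes the Oka--Grauert principle to trivialise each as a holomorphic principal $\C^*$-bundle $\C\times\C^*$, determines the transition functions $f_{\alpha\beta}$ from the complex Morse-lemma analysis near each fixed point carried out in Section~\ref{s3}, and then writes the map $\chi:M\to S$ down explicitly chart by chart. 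The Oka--Grauert step is the key difference: it hands you the fibre coordinate $v_\alpha$ on each chart abstractly, so the only analytic input required is the convergence of the Weierstrass product defining $P_v$, and the ``main obstacle'' you flag---producing $u_1,u_2$ globally---simply never arises.

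That obstacle is real and your proposal does not resolve it. Running LeBrun's integrate-and-exponentiate construction with infinitely many centres requires regularising an infinite product (or an infinite sum in the exponent); the hypothesis $\sum_{j\ge2}\|p_1-p_j\|^{-1}<\infty$ does not by itself give this, and there is nothing in the paper to appeal to, since Theorem~\ref{almostc} is itself proved via Oka--Grauert rather than by any direct construction of $u_1,u_2$. Your divisor-matching step also needs the auxiliary fact that an $S^1$-invariant nowhere-vanishing holomorphic function on $M$ factors through $u_3$; this is true but should be argued. Finally, on minimality: the point is not to rule out \emph{additional} exceptional curves but only that no component of the exceptional divisor is a $(-1)$-curve. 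The paper gets $E_{k,l}^2=-2$ in one line from adjunction and the triviality of $K_M$, which is cleaner than an appeal to the $S^1$-symmetry.
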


\subsection{Outline}
The paper is structured as follows. In Section~\ref{s2}, we investigate the Riesz measure of the harmonic function $V$ and give the proof of Theorem~\ref{uniV}. In Section~\ref{s3}, 
we first analyse the case for a fixed complex structure and prove our key Lemma~\ref{mbi}, which gives the biholomorphism between $M$ and a hypersurface in $\C^3$. Combining this with a measure argument, we establish Theorem~\ref{almostc} for almost all complex structures in the \hk\ family of $M$. Finally, in Section~\ref{s4}, we focus on the remaining complex structures and we prove that $M$ is the minimal resolution of a hypersurface under the additional finiteness condition, as stated in Theorem~\ref{vmr}.

\section{Existence criteria for harmonic functions via Riesz measures}\label{s2}
In this section we justify the choice of Gibbons-Hawking potential $V$ and give the proof of Theorem~\ref{uniV}. The strategy is to apply the uniqueness of the Riesz measure associated to a subharmonic function and to compute this measure explicitly in terms of the puncture configuration.

Let $U=\R^3\setminus A$ where $A=\{p_j:j\in\N^+\}\subset \R^3$ is a closed, discrete and countable set. Then
\[
H^2(U,\Z)=\prod_{j=1}^\wq\Z.
\]
We write a cohomology class $\he\in H^2(U,\Z)$ as $\he=(e_1,e_2,\cdots)$.

When $e_j\le0\ \forall\ j\in \N^+$ and
\begin{align*}
  \sum_{j\ge1}\frac{|e_j|}{|x-p_j|}<\wq,
\end{align*}
we can define
\begin{align*}
  V_\he(x)=\sum_{j=1}^\infty\frac{|e_j|}{4\pi|x-p_j|},\ x\in U.
\end{align*}
The series converges locally uniformly on $U$, and hence $V_\he$ is a well-defined harmonic function on $U$. To see this, set
\begin{align*}
    V_\he^{k}(x)=\sum_{j=1}^{k}\frac{|e_j|}{4\pi|x-p_j|}.
\end{align*}
Each $V_\he^k$ is a positive harmonic function on $U$, and the sequence $\{V_\he^k\}_{k\ge1}$ is monotonically increasing. We invoke the following version of Harnack's theorem (see \cite[Theorem 1.20]{MR460672}).
\begin{thm}
    [Harnack's Theorem]
    For $\{u^k(x)\}$ a sequence of monotonically increasing harmonic functions on a connected open subset $U$ of $\R^3$, if there exists a point $q\in U$ such that the sequence $\{u^k(q)\}$ is bounded above, then there exists a harmonic function $u(x)$ on $U$ such that
    \begin{align*}
       \lim_{k\to\wq} u^k(x)=u(x)
    \end{align*}
    uniformly on compact subsets of $U$.
\end{thm}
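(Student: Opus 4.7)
The plan is to deduce this Harnack-type convergence theorem from the classical Harnack inequality for non-negative harmonic functions on balls, combined with a covering/chaining argument exploiting the connectedness of $U$. The core observation is that the increments $u^{k}-u^{j}$ for $k>j$ are non-negative harmonic functions on $U$, so control of these increments at the single point $q$ will, via Harnack's inequality, propagate to uniform control on any compact subset.

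First I would record the pointwise Harnack inequality in $\R^3$: if $h\ge 0$ is harmonic on a ball $B_R(x_0)\subset U$, then for every $x\in B_r(x_0)$ with $r<R$ one has
\begin{align*}
  \frac{R(R-r)}{(R+r)^2}\,h(x_0)\;\le\;h(x)\;\le\;\frac{R(R+r)}{(R-r)^2}\,h(x_0).
\end{align*}
Applying this to $h=u^{k}-u^{j}\ge 0$ gives, on each such ball, a two-sided comparison between $(u^{k}-u^{j})(x)$ and $(u^{k}-u^{j})(x_0)$ by a constant depending only on $R$ and $r$, independent of $k,j$.

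Next I would propagate the bound at $q$ to a general point $p\in U$ by a Harnack chain: because $U$ is open and connected, and hence path-connected, there is a compact curve $\gamma\subset U$ from $q$ to $p$, and $\gamma$ can be covered by finitely many balls $B_{r_i}(x_i)\subset B_{R_i}(x_i)\subset U$ with $x_0=q$, $x_N=p$, and consecutive centres lying in one another's smaller balls. Iterating the right-hand Harnack estimate along this chain yields a constant $C_p$ with
\begin{align*}
  (u^{k}-u^{j})(p)\;\le\;C_p\,(u^{k}-u^{j})(q)
\end{align*}
for all $k>j$. Monotonicity of $\{u^k(q)\}$ together with the assumed upper bound shows that $\{u^k(q)\}$ is Cauchy, hence $(u^{k}-u^{j})(q)\to 0$ as $j,k\to\infty$, so $\{u^k(p)\}$ converges. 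Because any compact $K\subset U$ can be covered by finitely many balls each of which can be reached from $q$ by a common Harnack chain, the constants $C_p$ can be taken uniformly bounded on $K$ via the two-sided inequality above. This promotes pointwise Cauchy convergence to uniform Cauchy convergence on $K$, so $u^k$ converges uniformly on compact subsets of $U$ to some function $u$.

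Finally I would check that the limit $u$ is itself harmonic. Harmonicity is a local property and each $u^k$ satisfies the spherical mean-value identity on every closed ball $\overline{B_R(x_0)}\subset U$; uniform convergence on $\overline{B_R(x_0)}$ lets one pass to the limit under the spherical integral, so $u$ satisfies the mean-value property on every such ball and is therefore harmonic on $U$. The main delicate step is the Harnack chain: one must verify that the chain used to compare a fixed compact $K$ to the base point $q$ can be chosen with uniformly bounded length and uniformly bounded Harnack constants, which follows from compactness of $K$ and of a suitable covering path, but is the only place where connectedness of $U$ is genuinely used.
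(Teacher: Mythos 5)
Your proposal is correct, but it cannot be compared to a proof in the paper because the paper gives none: this Harnack theorem is invoked as a quoted auxiliary result, with a citation to Theorem 1.20 of the reference \cite{MR460672} on subharmonic functions, and is then applied to the partial sums $V_\he^k$. What you have written is the standard classical proof that the citation hides, and it is sound: the increments $u^k-u^j$ ($k>j$) are non-negative harmonic functions, the two-sided Harnack inequality you state is the correct one for $n=3$ (and the constants are indeed monotone in $\rho$, so they hold uniformly on $B_r(x_0)$), the chaining argument along a path from $q$ correctly propagates the Cauchy property of $\{u^k(q)\}$ to any point, the finite-cover argument upgrades this to uniform Cauchy convergence on compact sets, and passing the spherical mean-value identity to the locally uniform limit gives harmonicity of $u$. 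Two small points worth making explicit if you were to write this out in full: (i) the Harnack inequality on an \emph{open} ball $B_R(x_0)$ is obtained by applying the Poisson representation on balls of radius $R'<R$ and letting $R'\to R$, since $h$ need not extend continuously to $\partial B_R(x_0)$; and (ii) in the uniformity step, the clean statement is that for a finite cover $K\subset\bigcup_i B_{r_i}(x_i)$ one gets $\sup_K(u^k-u^j)\le\bigl(\max_i C_i C_{x_i}\bigr)(u^k-u^j)(q)$, where $C_i$ is the ball constant and $C_{x_i}$ the chain constant from $q$ to $x_i$. Your argument also works verbatim in $\R^n$ for any $n\ge2$, so it proves slightly more than the $\R^3$ statement the paper needs.
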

In our case, choosing $x\in U$ with
\begin{align*}
  \sum_{j\ge1}\frac{|e_j|}{|x-p_j|}<\wq,
\end{align*}
ensures that $\{V_\he^k(x)\}$ is uniformly bounded. By Harnack's theorem, the sequence $\{V_\he^k\}_k$ converges locally uniformly on $U$ to a harmonic function, which we denote by $V_\he$. Thus $V_\he$ is positive and harmonic on $U$.

Moreover, for $j\in\N^+$,
\begin{align*}
    \int_{\partial B(p_j,\epsilon_j)}\star {\df}V_\he=e_j
\end{align*}
for $\epsilon_j$ small enough, which implies that $[\star {\df}V_\he]=\he$.

Conversely, we show that the conditions in Theorem~\ref{uniV} are necessary. Suppose $V_\he$ is a positive harmonic function such that
\begin{align*}
  [\star {\df}V_\he]=\he=(e_1,e_2,\cdots)\in H^2(U,\Z),
\end{align*}
and assume, without loss of generality, that $e_1>0$. We appeal to the B$\hat{\text{o}}$cher's Theorem \cite[Theorem 3.9]{axler2013harmonic}.
\begin{thm}
    [B$\hat{\text{o}}$cher's Theorem]
    If $V$ is a positive and harmonic function on $B(0,1)\setminus\{0\}\subset\R^m$ with $m>2$, then there exists a function $v$ harmonic on $B(0,1)$ and a constant $b\ge0$ such that
    \begin{align}
        V(x)=v(x)+b|x|^{2-m}
    \end{align}
    for all $x\in B(0,1)\setminus\{0\}$.
\end{thm}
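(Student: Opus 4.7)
The plan is to extract the singular part of $V$ at the puncture and show that subtracting it leaves a harmonic extension. I would follow the spherical-harmonic route, which is very concrete in the Euclidean setting.

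First I would exploit the fact that a harmonic function on the punctured ball $B(0,1)\setminus\{0\}$ admits a convergent expansion in solid spherical harmonics on every annulus $\{r_1<|x|<r_2\}\subset B(0,1)\setminus\{0\}$, of the form
\begin{align*}
V(x)=\sum_{k=0}^\wq \bigl(a_k\, r^k + b_k\, r^{2-m-k}\bigr)\,Y_k(\theta),
\end{align*}
where $r=|x|$, $\theta=x/|x|$, and $\{Y_k\}$ runs over an orthonormal basis of spherical harmonics of degree $k$ on $S^{m-1}$. By uniqueness of the expansion, the two series
\begin{align*}
v(x):=\sum_{k=0}^\wq a_k\, r^k Y_k(\theta),\qquad s(x):=\sum_{k=0}^\wq b_k\, r^{2-m-k} Y_k(\theta)
\end{align*}
each converge; $v$ defines a harmonic function on $B(0,1)$, while $s$ carries all the singularity of $V$ at the origin.

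The main step is to show that $b_k=0$ for every $k\ge 1$, so that $s(x)=b_0\,r^{2-m}$. This is where positivity of $V$ must be used. I would fix $k_0\ge 1$ minimal with $b_{k_0}\neq 0$, write
\begin{align*}
V(x)=v(x)+b_0\,r^{2-m}+r^{2-m-k_0}Y_{k_0}(\theta)+R(x),
\end{align*}
where $R(x)$ collects the strictly higher-order terms and hence satisfies $r^{m-2+k_0}R(x)\to 0$ uniformly in $\theta$ as $r\to 0$. Multiplying by $r^{m-2+k_0}$, the limit as $r\to 0$ becomes $Y_{k_0}(\theta)$. Since $k_0\ge 1$, the spherical harmonic $Y_{k_0}$ is orthogonal to constants on $S^{m-1}$ and must change sign, so it takes strictly negative values on some open subset of the sphere. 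Choosing $\theta$ there and letting $r\to 0$ forces $V(x)\to -\wq$, contradicting $V>0$. Thus $b_k=0$ for all $k\ge 1$, and $b_0\ge 0$ by the same kind of sign consideration (or from positivity of the spherical mean). Setting $b=b_0$ and recalling the normalization of $|x|^{2-m}$ as the fundamental solution of the Laplacian in $\R^m$ then yields the desired decomposition $V(x)=v(x)+b\,|x|^{2-m}$.

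The main obstacle is justifying the termwise limit in the argument above rigorously: one must check that the tail $R(x)$ really is negligible compared with the $r^{2-m-k_0}Y_{k_0}$ term uniformly in $\theta$, which requires uniform bounds on the spherical harmonic coefficients $(a_k, b_k)$ on annuli. This is standard — it follows from $L^2$-orthogonality of $Y_k$ on the sphere together with the mean value property applied to $V$ on concentric spheres — but it is the one place where care is needed. Everything else is either definition chasing or separation of variables for the Laplacian in spherical coordinates.
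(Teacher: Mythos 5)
First, note that the paper itself does not prove this statement: it is quoted directly from Axler--Bourdon--Ramey \cite{axler2013harmonic}, so the comparison is with the standard proof rather than with an argument in the paper. Your overall strategy (Laurent-type expansion in spherical harmonics on the punctured ball, then use positivity to kill the singular terms of degree $k\ge 1$) is indeed a classical route, but your execution of the key step contains a genuine error: the asymptotics are backwards. In the singular series $s(x)=\sum_k b_k r^{2-m-k}Y_k(\theta)$, the terms become \emph{more} singular as $k$ increases, since the exponent $2-m-k$ becomes more negative. Hence choosing $k_0\ge 1$ \emph{minimal} with $b_{k_0}\neq 0$ isolates the \emph{weakest} nonzero singular term, and the tail $R(x)=\sum_{k>k_0}b_k r^{2-m-k}Y_k(\theta)$ satisfies $r^{m-2+k_0}R(x)=\sum_{k>k_0}b_k r^{k_0-k}Y_k(\theta)$, whose terms blow up like $r^{-(k-k_0)}$ as $r\to 0$. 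So your claim that $r^{m-2+k_0}R(x)\to 0$ uniformly is false unless all the higher $b_k$ already vanish --- which is exactly what you are trying to prove. Taking the maximal nonzero index instead would not repair this, because a priori infinitely many $b_k$ could be nonzero.

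The standard way to complete your approach avoids isolating a dominant term altogether: use orthogonality and positivity simultaneously for every $k$. For $k\ge 1$ and $0<r<1$,
\begin{align*}
\left| a_k r^k + b_k r^{2-m-k} \right| = \left| \int_{S^{m-1}} V(r\theta)\, Y_k(\theta)\, d\sigma(\theta) \right| \le \left( \max_{S^{m-1}}|Y_k| \right) \int_{S^{m-1}} V(r\theta)\, d\sigma(\theta) = C_k\left(a_0 + b_0\, r^{2-m}\right),
\end{align*}
where the last equality uses that the spherical mean of $V$ equals $a_0+b_0r^{2-m}$ up to normalizing constants. If $b_k\neq 0$ for some $k\ge 1$, the left-hand side grows like $r^{2-m-k}$ as $r\to 0$, while the right-hand side grows only like $r^{2-m}$, a contradiction. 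Hence $b_k=0$ for all $k\ge 1$, and $b_0\ge 0$ follows from positivity of the spherical mean as $r\to 0$. With this replacement your decomposition $V=v+b|x|^{2-m}$ goes through; the convergence facts you assume for the two half-series (regular part harmonic on the full ball, singular part harmonic on $\R^m\setminus\{0\}$) are standard and unproblematic.
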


Therefore, if $V_\he$ is a positive harmonic function on $U$ such that $[\star {\df}V_\he]=\he$, there exists a real harmonic function $V_1$ on $B(p_1,\epsilon)$ and a real constant $b_1\ge0$ such that
\begin{equation*}
    V_\he=V_1+b_1\cdot\frac{1}{4\pi |x-p_1|},\ \ x\in B(p_1,\epsilon)\setminus \{p_1\}
\end{equation*}
for $\epsilon$ small enough. Furthermore,
\begin{align*}
  e_1=\int_{\p B(p_1,{\frac{\epsilon}{2}})}\star {\df}V_\he=\int_{\p B(p_1,{\frac{\epsilon}{2}})}\star {\df}V_1+\int_{\p B(p_1,{\frac{\epsilon}{2}})}\star {\df}\lc b_1\cdot\frac{1}{4\pi|x-p_1|} \rc=-b_1.
\end{align*}
If $e_1>0$, this gives $b_1<0$, which contradicts the requirement $b_1\ge0$ in B$\hat{\text{o}}$cher's Theorem. Hence no such positive $V_\he$ can exist if some $e_j>0$.

To extract the additional summability condition, we now use the language of Riesz measures. We recall two standard results from the theory of subharmonic functions \cite[Theorem 3.9, Theorem 3.20]{MR460672}.

\begin{thm}
    If $u(x)$  is a subharmonic function on a connected open subset $U\subset\R^m$ with $m\ge 3$ and not identically $-\wq$, then there exists a unique Radon measure $\mu$ in $U$ satisfying that for any compact subset $E$ of $U$, there is a harmonic function $h(x)$ defined on the interior of $E$ and for $x$ where $h$ is defined,
    \begin{align}
        u(x)=\int_EK(x-\xi){\df}\mu+h(x)
    \end{align}
    where
    \begin{align}
        K(x)=-|x|^{2-m}.
    \end{align}
    The measure $\mu$ is the Riesz measure of $u$.
\end{thm}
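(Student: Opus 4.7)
The plan is to reduce the representation to the distributional characterization of subharmonic functions together with the fundamental solution of the Laplacian on $\R^m$. First I would recall that a locally integrable upper semicontinuous function $u$ is subharmonic on $U$ if and only if $\Delta u\ge 0$ in the sense of distributions; for an upper semicontinuous function with the sub-mean-value property this equivalence is obtained by mollifying $u$ with a radial bump function, verifying the distributional inequality for the smooth monotone approximants $u_\varepsilon$ by integration by parts, and passing to the decreasing limit as $\varepsilon\downarrow 0$.

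Once $\Delta u\ge 0$ is established as a distribution, Schwartz's theorem on positive distributions produces a unique positive Radon measure $\nu$ on $U$ with $\langle \Delta u,\varphi\rangle=\int_U\varphi\,{\df}\nu$ for every test function $\varphi$. The kernel $K(x)=-|x|^{2-m}$ satisfies $\Delta K=c_m\delta_0$ distributionally for a positive constant $c_m$ depending only on $m$, so setting $\mu:=c_m^{-1}\nu$ produces the candidate Radon measure from the theorem.

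For a compact subset $E\subset U$, I then form the Newtonian potential
\begin{align*}
    p_E(x):=\int_E K(x-\xi)\,{\df}\mu(\xi).
\end{align*}
The integrand $|x-\xi|^{2-m}$ is locally integrable in $\xi$ for $m\ge 3$, so $p_E$ is well defined and locally integrable on $\R^m$; a Fubini argument combined with the identity $\Delta K=c_m\delta_0$ yields $\Delta p_E=\mu|_E$ as distributions on $\R^m$. Consequently on $\inter(E)$ the function $h:=u-p_E$ satisfies $\Delta h=\Delta u-\Delta p_E=c_m\mu-c_m\mu|_E=0$, using that $\mu|_E$ coincides with $\mu$ on $\inter(E)$. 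By Weyl's lemma $h$ agrees almost everywhere with a genuine harmonic function on $\inter(E)$, giving the desired decomposition.

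Uniqueness of $\mu$ is then immediate: any Radon measure $\mu'$ producing such a representation on every compact $E$ satisfies $\Delta u=c_m\mu'$ distributionally, forcing $c_m\mu'=\nu=c_m\mu$ and hence $\mu=\mu'$ by the uniqueness part of the Schwartz correspondence. The only genuine obstacle is verifying that $\Delta p_E=\mu|_E$ passes through the Fubini calculation when $\mu$ is merely a Radon measure and $K$ is singular at the origin; this I would resolve by approximating $\mu$ by compactly supported smooth densities, applying the identity for the smooth approximants where classical differentiation under the integral is justified, and invoking dominated convergence against test functions to pass to the limit.
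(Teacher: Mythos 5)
The paper does not actually prove this statement: it is quoted as a classical result (the Riesz decomposition theorem) from the cited reference of Hayman--Kennedy, so there is no internal proof to compare against. Your distributional argument is the standard modern proof of that theorem, and its overall architecture is correct: subharmonicity gives $\Delta u\ge 0$ in the sense of distributions, Schwartz's theorem on positive distributions converts $\Delta u$ into a positive Radon measure, the fundamental-solution identity $\Delta K=c_m\delta_0$ fixes the normalization $\mu=c_m^{-1}\Delta u$, the Newtonian potential of $\mu|_E$ peels off the singular part on each compact $E$, Weyl's lemma identifies the remainder as harmonic, and uniqueness follows because any representing measure must equal $c_m^{-1}\Delta u$ on the interior of every compact set.

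Three points need tightening before this is complete. (i) Before $u$ can be paired with test functions or mollified, you must know $u\in L^1_{\mathrm{loc}}(U)$; this is not among the theorem's hypotheses but a lemma to be proved, and it is exactly where connectedness and $u\not\equiv-\infty$ enter (on a disconnected $U$ the function could be identically $-\infty$ on one component and the statement would fail there). Your proposal builds ``locally integrable'' into its starting point without deriving it. (ii) There is a constant slip: with $\Delta K=c_m\delta_0$, the Fubini computation gives $\Delta p_E=c_m\,\mu|_E$, not $\mu|_E$; your later line uses the correct version, so this is only a typo, but as written the two assertions are inconsistent. (iii) Weyl's lemma only yields that $u-p_E$ agrees \emph{almost everywhere} with a harmonic function on $\inter(E)$, whereas the theorem asserts the pointwise identity $u=p_E+h$ there. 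To upgrade, observe that $p_E$ is itself subharmonic (it is the potential of a finite positive measure against the subharmonic kernel $K$), so both $u$ and $p_E+h$ are subharmonic and agree a.e.; since a subharmonic function is recovered at every point as the limit of its ball averages as the radius shrinks to zero, a.e. equality forces equality everywhere on $\inter(E)$. None of these is a structural flaw, but all three belong in a careful write-up.
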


\begin{thm}\label{subharmonic}
    Given any Radon measure $\mu$ defined on $\R^m$ where $m\ge3$, if $n(t)=\mu(\wl{B(0,t)})$, then $\mu$ is the Riesz measure of some subharmonic function $u(x)$ which is bounded from above on $\R^m$ if and only if
    \begin{align}\label{vfb}
        \int_1^\wq\frac{n(t)}{t^{m-1}}{\df}t<\wq.
    \end{align}
    Moreover, when this condition holds, the unique subharmonic function corresponding to $\mu$ is given by
    \begin{align}
        u(x)=C-\int_{\R^m}\frac{1}{|x-\xi|^{m-2}}{\df}\mu.
    \end{align}
    where $C$ is the least upper bound of $u(x)$.
\end{thm}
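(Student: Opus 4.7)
The plan is to treat the statement as a Liouville-type uniqueness principle attached to the Newton potential
\begin{align*}
N_\mu(x):=\int_{\R^m}\frac{{\df}\mu(\xi)}{|x-\xi|^{m-2}},
\end{align*}
which is the natural candidate for the ``potential part'' in the Riesz decomposition of any subharmonic $u$ with Riesz measure $\mu$. Both implications rest on the layer-cake identity
\begin{align*}
\int_{B(0,R)}\bigl(|\xi|^{2-m}-R^{2-m}\bigr)\,{\df}\mu(\xi)=(m-2)\int_0^R\frac{n(t)}{t^{m-1}}\,{\df}t,
\end{align*}
obtained by integration by parts in the Riemann--Stieltjes integral against ${\df}n(t)$ with a vanishing boundary term at $t=R$, which translates the summability condition \eqref{vfb} directly into finiteness of the relevant Newton-potential integrals.

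For the sufficiency direction I would take $u(x):=C-N_\mu(x)$ as the candidate and verify the required properties in order. The layer-cake identity (after translating the origin, if necessary, so that $\mu$ has no atom there) forces $N_\mu$ to be finite everywhere, and dominated convergence shows $N_\mu(x)\to 0$ as $|x|\to\wq$. Since $-|x-\xi|^{2-m}$ is subharmonic in $x$, integrating the sub-mean-value inequality against the positive measure $\mu$ via Fubini preserves subharmonicity, so $u$ is subharmonic. The nonnegativity of $N_\mu$ yields $u\le C$, and the distributional identity $\Delta|x|^{2-m}=-(m-2)\omega_m\delta_0$ identifies the Riesz measure of $u$ with $\mu$ in the sign convention of the preceding theorem.

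For the necessity direction and the explicit formula, I would apply the Poisson--Jensen formula to $u$ on $B(0,R)$ using the Green function $G_R(0,\xi)=\frac{1}{(m-2)\omega_m}\bigl(|\xi|^{2-m}-R^{2-m}\bigr)$. After the layer-cake reduction above this produces
\begin{align*}
M_u(R)=u(0)+\frac{1}{\omega_m}\int_0^R\frac{n(t)}{t^{m-1}}\,{\df}t,
\end{align*}
and the hypothesis $M_u(R)\le C$ yields \eqref{vfb} upon letting $R\to\wq$. With \eqref{vfb} in hand, $N_\mu$ is globally defined and $h:=u+N_\mu$ has vanishing Riesz measure, hence is harmonic on $\R^m$; taking the spherical mean of $h$ about an arbitrary point $x_0$ and noting $M_{N_\mu}(x_0,R)\to 0$ as $R\to\wq$ gives $h(x_0)\le C$ for every $x_0$, so $C-h$ is a nonnegative harmonic function on $\R^m$ and hence a constant $C-C'$ by the classical Liouville theorem. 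Letting $|x|\to\wq$ in $u=C'-N_\mu$ then forces $C'=\sup u=C$, which yields the explicit representation together with its uniqueness.

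The main obstacle will be the careful bookkeeping around the integration by parts: the boundary term $\lim_{t\to 0^+}t^{2-m}n(t)$ must vanish, which requires $\mu$ to have no atom at the reference point, and one must verify that $M_{N_\mu}(x_0,R)\to 0$ at a rate uniform enough to justify the dominated-convergence step in the Liouville argument. Both issues are resolved by choosing the base point so that $u(x_0)>-\wq$, which automatically rules out an atom of $\mu$ there, after which the constants $\omega_m$ and $(m-2)$ track through cleanly.
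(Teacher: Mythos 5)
The paper never proves this statement: it is quoted as a standard result from the theory of subharmonic functions, with a citation to Hayman--Kennedy \cite[Theorem 3.20]{MR460672}, and is then used as a black box in the proof of Theorem~\ref{uniV}. So there is no internal proof to compare against; what you have written is essentially a reconstruction of the classical potential-theoretic argument, and its skeleton is the right one: the Tonelli/layer-cake identity $\int_{B(0,R)}\bigl(|\xi|^{2-m}-R^{2-m}\bigr)\,\mathrm{d}\mu=(m-2)\int_0^R n(t)t^{1-m}\,\mathrm{d}t$, the Jensen-type formula for spherical means giving necessity of \eqref{vfb}, the candidate $u=C-N_\mu$ for sufficiency, and the decomposition $u=h-N_\mu$ with $h$ harmonic plus Liouville/Harnack for $C-h\ge0$ to get the representation and uniqueness.

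However, three of your intermediate claims are false as stated, and notably they fail precisely for the measure to which the paper applies this theorem, namely $\mu=\sum_j|e_j|\delta_{p_j}$ with atoms escaping to infinity. (i) The tail condition does not force $N_\mu$ to be finite everywhere: $N_\mu=+\infty$ at every atom, no matter where you place the origin. What is true, and all that is needed, is that $N_\mu$ is locally integrable (hence finite a.e.) and lower semicontinuous, so that $C-N_\mu$ is a genuine subharmonic function, allowed to equal $-\infty$ on a polar set. (ii) The pointwise decay $N_\mu(x)\to0$ as $|x|\to\infty$ is false for measures with unbounded support: along points approaching the far-away atoms, $N_\mu\to\infty$, and no dominating function exists for dominated convergence. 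The correct and sufficient statement is decay of spherical means, $M_{N_\mu}(0,R)=R^{2-m}n(R)+\int_{|\xi|>R}|\xi|^{2-m}\,\mathrm{d}\mu\to0$, where $R^{2-m}n(R)\to0$ follows from \eqref{vfb} and monotonicity of $n$; this also yields $\inf N_\mu=0$, which is what identifies $C$ with $\sup u$ in both directions, so the pointwise limit is neither available nor needed. (iii) Absence of an atom at the base point does not make the boundary term $t^{2-m}n(t)$ vanish: for $m=3$ take $\mu$ to be length measure on a segment through $x_0$, so $n_{x_0}(t)\sim t$, there is no atom, yet $t^{2-m}n_{x_0}(t)\not\to0$ and the potential diverges at $x_0$. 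The clean repair is to avoid integration by parts entirely: write $|\xi|^{2-m}-R^{2-m}=(m-2)\int_{|\xi|}^{R}t^{1-m}\,\mathrm{d}t$ and apply Tonelli, so the identity holds unconditionally in $[0,\infty]$; then choosing $x_0$ with $u(x_0)>-\infty$ and using $M_u(x_0,R)\le C$ forces $\int_0^\infty n_{x_0}(t)t^{1-m}\,\mathrm{d}t<\infty$, giving both \eqref{vfb} and finiteness of the local potential at $x_0$. With these repairs, plus a word on Weyl's lemma when you declare $u+N_\mu$ harmonic (it is a priori only defined a.e.), your argument is a correct proof of the quoted theorem.
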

\begin{proof}
  [Proof of Theorem~\ref{uniV}]

Assume $V_\he$ is a positive harmonic function on $U$ with $[\star {\df}V_\he]=\he$. We extend it to a subharmonic function on $\R^3$ by setting
\begin{align}\label{eforu}
    -V_\he(p_i):=\limsup_{x\to p_i}(-V_\he)(x).
\end{align}
In this way, the function $-V_\he$ is upper semi-continuous on $\R^3$, satisfies $-\wq\le -V_\he(x)<\wq$, and obeys the mean-value inequality: for any point $x_0\in\R^3$ there exists a sufficiently small $r>0$ such that
\begin{align*}
    (-V_\he)(x_0)\le\frac{1}{4\pi r^2}\int_{S(x_0,r)}(-V_\he)(x){\df}\sigma(x),
\end{align*} 
where ${\df}\sigma(x)$ denotes surface area on the sphere $S(x_0,r)$. Define $u:=-V_\he$. Then $u$ is a subharmonic function on $\R^3$. Let $\mu$ be the Riesz measure of $u$ and set $n(t)=\mu(\wl{B(0,t)})$. We shall calculate $n(t)$.

We first compute $\mu$ locally near each puncture point. For $\epsilon_j>0$ small and $x\in B(p_j,\epsilon_j)\setminus \{p_j\}$, $-u(x)=V_\he(x)$ is a positive harmonic function. Then we can apply B$\hat{\text{o}}$cher's Theorem,
\begin{align*}
  V_\he(x)=\frac{|e_j|}{4\pi|x-p_j|}+\wt{V_j}(x)
\end{align*}
where $\wt{V_j}$ is a harmonic function on $B(p_j,\epsilon_j)$. Thus $u(x)$ and $-\frac{|e_j|}{4\pi|x-p_j|}$ determine the same Riesz measure on $B(p_j,\epsilon_j)$. In particular, for any $\vp\in C_0^\wq(B(p_j,\epsilon_j))$,
\begin{align*}
  \int_{B(p_j,\epsilon_j)}\vp {\df}\mu=-\int_{B(p_j,\epsilon_j)}\frac{|e_j|}{4\pi|x-p_j|}\cdot\Delta\vp {\df}x=|e_j|\cdot\vp(p_j),
\end{align*}
so $\mu$ has an atom of mass $|e_j|$ at $p_j$.

Fix $t>0$ and set
\begin{align*}
  I_t=\{p_j:|p_j|\le t\}=A\cap\overline{B(0,t)}.
\end{align*}
Then $I_t$ is a finite set. For $\epsilon>0$ sufficiently small, we can arrange that $I_t=A\cap B(0,t+\epsilon)$. Arguing as above, there exists harmonic function $v_t(x)$ on $B(0,t+\epsilon)$ such that 
\begin{align*}
  V_\he(x)=\sum_{j:p_j\in I_t} \frac{|e_j|}{4\pi|x-p_j|}+v_t(x),\ \ x\in B(0,t+\epsilon)\setminus I_t
\end{align*}
and 
\begin{align*}
  n(t)=\sum_{j:p_j\in I_t}|e_j|.
\end{align*}
Based on the argument above,
\begin{align*}
  n(t)=\mu(\wl{B(0,t)})=\sum_{j=1}^\wq |e_j|\cdot \chi_{\{|p_j|\le t\}}(t),
\end{align*}
where 
\begin{align*}
  \chi_{\{|p_j|\le t\}}(t)=
  \left\{
  \begin{aligned}
    &1,&\text{ if }|p_j|\le t,\\
    &0,&\text{ if }|p_j|>t.
  \end{aligned}
  \right.
\end{align*}

To apply Theorem~\ref{subharmonic}, we now evaluate the integral~\eqref{vfb}
\begin{align*}
  \int_1^\wq\frac{n(t)}{t^{m-1}}{\df}t
\end{align*}
in the case $m=3$. Without loss of generality, we may reorder the points so that
\begin{align*}
  0<|p_1|\le|p_2|\le|p_3|\le\cdots
\end{align*}
which preserves the set $A$. Assume that there exists some index $i$ such that $|p_i|\le1$. Let $N$ be the index satisfying 
\begin{align*}
  |p_1|\le\cdots\le|p_N|\le 1<|p_{N+1}|\le\cdots.
\end{align*}
Then
\begin{align*}
        \int_1^\wq\frac{n(t)}{t^{2}}{\df}t
        &=\int_{1}^{|p_{N+1}|}\frac{1}{t^2}\lc \sum_{i=1}^N |e_i|\rc {\df}t
          +\sum_{i\ge N+1}^\wq\int_{|p_{i}|}^{|p_{i+1}|}\frac{1}{t^2}\lc \sum_{j=1}^{i}|e_j| \rc {\df}t\\
        &=\lc1-\frac{1}{|p_{N+1}|}\rc\lc\sum_{i=1}^{N}|e_i| \rc+\sum_{i\ge N+1}\lc\frac{1}{|p_i|}-\frac{1}{|p_{i+1}|}\rc\lc \sum_{j=1}^{i}|e_j| \rc\\
        &=\sum_{i=1}^{N}|e_i| +\sum_{i\ge N+1}\frac{|e_i|}{|p_i|}.
\end{align*}
If instead $|p_j|>1$ for all $j\in\mathbb{N}^+$, then the same computation yields
\begin{align*}
  \int_1^\wq\frac{n(t)}{t^{m-1}}{\df}t
  &=\sum_{j=1}^{\infty}\frac{|e_j|}{|p_j|}.
\end{align*}
Finally, by a simple application of the triangle inequality, the convergence of $\sum_j\frac{|e_j|}{|p_j|}$ is equivalent to the existence of a point $x\in U$ such that
\begin{align*}
  \sum_{j=1}^\wq\frac{|e_j|}{|x-p_j|}<\infty.
\end{align*}
Thus Theorem~\ref{subharmonic} shows that the summability condition in Theorem~\ref{uniV} is also necessary, completing the proof.

\end{proof}

\section{Hypersurface realizations in the \hk\ family}\label{s3}
In this section we describe the complex structures in the \hk\ family of $M$ which arise from certain directions. We begin by fixing a direction along the $x$-axis, and show that $(M,J_x)$ is biholomorphic to a smooth hypersurface in $\C^3$, which is recorded in Lemma~\ref{mbi}. We then prove that for almost every direction on $\ms^2$ and the corresponding complex structure $J_v$, the same description applies, thereby obtaining Theorem~\ref{almostc}. Our approach builds on the work of LeBrun~\cite{lebrun1991complete}, but is adapted to the case where the $S^1$-action has countably many fixed points.

Let $H:\R^3\to\C$ be the projection $H(x,y,z):=y+\ii z$ and $a_j:=H(p_j)=y_j+\ii z_j$. We say that the configuration $\{p_j\}_{j=1}^\wq$ is {\it generic} if the projection
\begin{align*}
  H:\{p_j\}_{j=1}^\wq\to\C
\end{align*}
is injective, that is, $\{a_j\}_{j=1}^\wq$ are pairwise distinct. Note that the projection $H$ here is exactly $\Pi_x$ introduced in Section~\ref{main}.

In the sequel we shall mainly work with the complex structure $J_x$ and for simplicity, we write $J:=J_x$. For later use it is convenient to describe it in slightly more concrete coordinates. Writing $\omega={\df}t+\theta$ for some $1$-form $\theta$ such that ${\df}\theta=\star {\df}V$, where $t$ is the angular coordinate along the $S^1$-fibres, the complex structure $J$ can be characterized by
\begin{align}\label{J}
  {\df}x\mapsto V^{-1}({\df}t+\theta),\ {\df}y\mapsto {\df}z.
\end{align}

Our key Lemma is the following description for $(M,J)$ under the generic hypothesis on the puncture points. For this we recall the standard Weierstrass factors
\begin{align*}
  E_0(u)=1-u,\ \ E_j(u)=(1-u)\exp\lc u+\frac{u^2}{2}+\cdots+\frac{u^j}{j} \rc.
\end{align*}

\begin{lem}\label{mbi}
    Suppose that the puncture points $\{p_j\}$ satisfy
\begin{align*}
  \sum_{j=2}^\wq\frac{1}{\|p_1-p_j\|}<\wq,
\end{align*}
and that the configuration $\{p_j\}_{j=1}^\wq$ is generic. If, in addition, the set $\{a_j\}_{j=1}^\wq$ has no accumulation point on $\C$, then
\begin{align*}
  P(u)=u^{\delta}\prod_{a_j\ne 0}E_{j}\lc u/a_j \rc
\end{align*}
where 
\begin{align*}
\delta=
    \left\{
    \begin{aligned}
        0\ \ &\text{ if }0\notin\{a_j\},\\
        1\ \  &\text{ if }0\in\{a_j\},
    \end{aligned}
    \right.
\end{align*}
is an entire function. Moreover $(M,J)$ is biholomorphic to the hypersurface in $\C^3$ given by
\begin{align*}
  u_1u_2=P(u_3).
\end{align*}
\end{lem}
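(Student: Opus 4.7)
The plan is to follow LeBrun's finite-puncture construction, replacing his polynomial with the Weierstrass product $P(u)$ and exhibiting three holomorphic functions $u_1,u_2,u_3$ on $M$ satisfying $u_1u_2=P(u_3)$. That $P$ is entire is immediate from the standard Weierstrass factorization theorem: since $\{a_j\}$ has no accumulation in $\C$ we have $|a_j|\to\infty$, and the canonical estimate $|E_j(w)-1|\le|w|^{j+1}$ for $|w|\le 1/2$ yields locally uniform convergence on $\C$, producing an entire function with simple zeros precisely at each $a_j$.

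For the holomorphic functions on $M$, I would first use \eqref{J} to check that $u_3:=y+\ii z$ is $J$-holomorphic (equivalently, the complex moment map for the $S^1$-action with respect to the holomorphic symplectic form $\Omega_y+\ii\Omega_z$), so that the genericity hypothesis forces the critical values of $u_3$ to be precisely the distinct points $a_j$. To construct $u_1,u_2$, I would work on the free locus $M^\circ:=\pi^{-1}(\R^3\setminus A)$, where the $S^1$-action complexifies to a holomorphic $\C^*$-action and $u_3:M^\circ\to\C\setminus\{a_j\}$ becomes a holomorphic principal $\C^*$-bundle. Because $\{a_j\}$ is discrete without accumulation, $\C\setminus\{a_j\}$ is Stein and the bundle is holomorphically trivial; a trivialization gives a fiber coordinate $u_1$, and setting $u_2:=P(u_3)/u_1$ on $\{u_1\ne 0\}$ yields the second function, making $u_1u_2=P(u_3)$ tautological. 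The substantive point is to fix the trivialization so that $u_1$ extends holomorphically across each fixed point $\pi^{-1}(p_j)$, vanishing simply along one of the two local $\C$-branches of the nodal fiber, and so that $u_2$ extends across $\{u_1=0\}$ by vanishing simply on the other branch. Once this is done, $\Phi:=(u_1,u_2,u_3)$ lands in $X:=\{u_1u_2=P(u_3)\}\subset\C^3$, and I would verify that $\Phi$ is a biholomorphism fiberwise: off $\{a_j\}$ both fibers are $\C^*$, while over each $a_j$ both fibers are nodal pairs of affine lines matched by the extension property; non-degeneracy of $d\Phi$ at each fixed point follows from the standard Gibbons-Hawking local $\C^2$-model, where $(u_1,u_2)$ serve as holomorphic coordinates with $u_3-a_j$ equal to $u_1u_2$ up to a nonzero constant.

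I expect the main obstacle to be arranging the trivialization so that $u_1u_2$ equals $P(u_3)$ on the nose, rather than $e^{h(u_3)}P(u_3)$ for some entire $h$. LeBrun's finite-puncture argument absorbs such ambiguity into a polynomial normalization, but here the Weierstrass exponential factors $\exp(\sum_{k=1}^{j}(u/a_j)^k/k)$ hidden inside each $E_j$ must be produced through a careful choice of section of the $\C^*$-bundle. I anticipate implementing this by integrating the connection form $\omega$ along a specified system of paths from a basepoint and exploiting the summability $\sum_{j\ge 2}\|p_1-p_j\|^{-1}<\infty$ to control the resulting infinite product representation of $u_1$. This is the step where the infinite-topological-type setting demands genuinely new input beyond LeBrun's argument.
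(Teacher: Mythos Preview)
Your overall strategy is sound and close to the paper's, but you have misidentified where the real work lies. The $e^{h(u_3)}$ ambiguity you flag as the main obstacle is in fact harmless: if your construction yields $u_1u_2=e^{h(u_3)}P(u_3)$ for some entire $h$, the change of variables $(u_1,u_2,u_3)\mapsto(e^{-h(u_3)}u_1,u_2,u_3)$ carries that hypersurface biholomorphically onto $\{u_1u_2=P(u_3)\}$. The paper isolates exactly this observation as a one-line lemma (multiplying the gluing cocycle by any entire nonvanishing $g$ gives a biholomorphic gluing), so no delicate integration of the connection form and no appeal to the summability hypothesis is needed here; summability enters only to make $V$ well-defined in the first place.

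There is also a small slip and an underdeveloped step. First, $u_3:M^\circ\to\C$ is surjective, and over each $a_j$ the fiber in $M^\circ$ is two disjoint copies of $\C^*$, so $u_3$ is not a principal $\C^*$-bundle until you restrict the source to $u_3^{-1}(\C\setminus\{a_j\})$; the paper instead uses two charts $M^\pm$, each a trivial $\C^*$-bundle over all of $\C$ containing one of the two branches over every $a_j$, and analyzes the transition function between them. Second, the step you wave at as ``the standard Gibbons--Hawking local $\C^2$-model'' is where the actual content sits: one must show that near each fixed point the transition between the two trivializations has a \emph{simple} pole at $a_j$. The paper does this via the complex Morse lemma for $H$ followed by a further normalization of the Morse chart so that the $\C^*$-generator becomes exactly $s_1\partial_{s_1}-s_2\partial_{s_2}$; from those coordinates the local transition is computed to be $\epsilon^2/(u-a_j)$, and since $fP$ is then entire and nonvanishing, the harmless-$g$ lemma finishes the argument. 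Your plan would benefit from replacing the anticipated connection-form analysis with this local Morse/normal-form computation.
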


\subsection{A holomorphic $\C^*$ action on $M$}
Let $\frac{\p}{\p t}$ denote the vector field generating the $S^1$-action along the fibres of $\pi:M\to\R^3$. Since the \hk\ structure is $S^1$-invariant, we have $\cl_{\frac{\p}{\p t}}J=0$, so $\frac{\p}{\p t}$ is a real holomorphic vector field with respect to $J$. Its $(1,0)$-part is 
\begin{align*}
  \lc \frac{\p}{\p t} \rc^{1,0} =\frac12\lc \frac{\p}{\p t}-\ii J\frac{\p}{\p t} \rc.
\end{align*}
We now define a holomorphic vector field
\begin{align*}
  \xi:=-\ii\lc \frac{\p}{\p t} \rc^{1,0}=-\frac {\ii}{2}\lc \frac{\p}{\p t}-\ii J\frac{\p}{\p t}  \rc=\frac 12 \lc V^{-1}\hat{\frac{\p}{\p x}}-\ii\frac{\p}{\p t}\rc
\end{align*}
where $\hat{\frac{\p}{\p x}}$ is the horizontal lift of $\frac{\p}{\p x}$ with respect to the connection $\omega$. The last equality follows from the explicit description of $J$ in \eqref{J}.

Note that $V^{-1}\hat{\frac{\p}{\p x}}$ is the horizontal lift of the vector field $V^{-1}\frac{\p}{\p x}$ on $\R^3$. Hence the completeness of flow generated by $V^{-1}\hat{\frac{\p}{\p x}}$ is equivalent to that of flow generated by $V^{-1}\frac{\p}{\p x}$ on $\R^3$. Consider an integral curve $(x(s),y,z)$ of $V^{-1}\frac{\p}{\p x}$. Along such a curve we have
\begin{align*}
  \frac{{\df}x(s)}{{\df}s}=V^{-1}(x,y,z),
\end{align*}
and we have
\begin{align*}
  \int_{x(s_1)}^{x(s_2)}V(x,y,z){\df}x=\int_{s_1}^{s_2}{\df}s=s_2-s_1.
\end{align*}
Since $V$ is positive and smooth on $U$, these integral curves extend for all $s\in\R$, and the flow of $V^{-1}\frac{\p}{\p x}$ on $\R^3$ is complete. It follows that the real and imaginary parts of $\xi$ are complete vector fields, and hence $\xi$ generates a holomorphic $\C$-action with complete complex flow. We denote this flow by
\begin{align*}
  \Phi_\lambda:M\to M,\ \lambda\in\C.
\end{align*}
Writing $\lambda=t+\ii s$, the flow may be decomposed as
  \begin{align*}
    \Phi_{t+is}(p)=\phi_t\circ\psi_s(p)=\psi_s\circ\phi_t(p)
  \end{align*} 
  where the real flows $\phi_t$ and $\psi_s$ are defined by
  \begin{align*}
    &\frac{\p\phi_t}{\p t}=V^{-1}\hat{\frac{\p}{\p x}}=-J\frac{\p}{\p t},\\
    &\frac{\p\psi_s}{\p s}=JV^{-1}\hat{\frac{\p}{\p x}}=\frac{\p}{\p t}.
  \end{align*}
In particular, $\psi_s$ coincides with the original $S^1$-action and $\psi_s=\psi_{s+2\pi}$. Thus $\Phi_{t+\ii s}=\Phi_{t+\ii s+2\pi \ii}$. The $\C$-action on $M$ descends via the exponential map to a holomorphic $\C^*$-action on $M$. Concretely,
\begin{align*}
  \C^*\times M\to M,\ (w,p)\mapsto\Phi_{\log w}(p).
\end{align*}

Next we describe the orbits of this $\C^*$-action. The holomorphic function
\begin{align*}
  H:M&\to\C,\\
 (x,y,z,t) &\mapsto y+\ii z
\end{align*}
is the natural extension of the projection $H:\R^3\to\C$ introduced in Section~\ref{main}. For $u\in\C$, let
\begin{align*}
      L_u:=\{(x,y,z)\in\R^3:y+\ii z=u\}
\end{align*}
denote a straight line in $\R^3$ parallel to $x$-axis and set $C_u:=\pi^{-1}(L_u)\subset M$. Then we have:
\begin{itemize}
    \item If $u\ne y_j+ \ii z_j$, $C_u$ is a single orbit.
    \item If $u=y_j+ \ii z_j$ for some $j$, $C_u$ is the union of three orbits: the fixed point $\{q_j:=\pi^{-1}(p_j)\}$, together with two open orbits $\pi^{-1}(\{(x,y_j,z_j):x>x_j\})$ and $\pi^{-1}(\{(x,y_j,z_j):x<x_j\})$.
\end{itemize}

Consider
\begin{align*}
  \Psi:=-\ii({\df}y\wedge\omega+V{\df}z\wedge {\df}x)+({\df}z\wedge \omega+V{\df}x\wedge {\df}y)
\end{align*}
which is a parallel $(2,0)$-form preserved by $\xi$. The holomorphic function $H:M\to \C,\ q=(x,y,z,t)\mapsto y+\ii z$ satisfies
\begin{align*}
  {\df}H=\iota_{\xi}\Psi.
\end{align*}

$M_0=M\setminus\{\pi^{-1}(\{p_j\})\}$ is the open subset of $M$ where the $S^1$-action is free. We can cover $M_0$  by two open subsets:
\begin{align*}
  &M^+:=\pi^{-1}\lc\{u\ne y_j+ \ii z_j\text{ for all }j\}\cup\bigcup_{j}\{u=y_j+\ii z_j\text{ and}\ x>x_j\}\rc\\
  &M^-:=\pi^{-1}\lc\{u\ne y_j+\ii z_j\text{ for all }j\}\cup\bigcup_{j}\{u=y_j+\ii z_j\text{ and}\ x<x_j\}\rc.
\end{align*}
Then $H:M^{\pm}\to\C$ are holomorphic principal $\C^*$-bundles. $\C$ is a Stein manifold. Since $\C^*$ deformation-retracts to $S^1$, we have
\begin{align*}
  [\C,K(\Z,2)]\cong H^2(\C,Z)=0.
\end{align*}
Hence the principal $\C^*$-bundles over $\C$ are all topologically trivial. Then by the Oka-Grauert Principle, \cite[Theorem 8.2.1]{forstnerivc2011stein}, $M^{\pm}$ are both biholomorphic to $\C\times\C^*$ as holomorphic principal $\C^*$-bundles over $\C$.

\begin{thm}
[Oka-Grauert Principle] For Stein manifold $X$ and every complex Lie group $G$, the holomorphic and the topological classes of principal $G$-bundles over $X$ are in one-to-one correspondence.
\end{thm}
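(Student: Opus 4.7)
The plan is to identify both the holomorphic and the topological isomorphism classes of principal $G$-bundles over $X$ with the Čech cohomology sets $H^1(X,\mathcal{O}(G))$ and $H^1(X,\mathcal{C}(G))$ respectively, and to prove that the natural inclusion of sheaves $\mathcal{O}(G)\hookrightarrow\mathcal{C}(G)$ induces a bijection in degree one. Concretely, I would establish two statements: surjectivity, that every continuous principal $G$-bundle admits a compatible holomorphic structure; and injectivity, that two holomorphic bundles which are continuously isomorphic are already holomorphically isomorphic.

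First I would dispose of the abelian case, which is the only one actually needed in Section~\ref{s3} (where $G=\mathbb{C}^*$). The two exponential short exact sequences of sheaves on $X$,
\begin{equation*}
0\to\mathbb{Z}\to\mathcal{O}\xrightarrow{\exp(2\pi\ii\,\cdot)}\mathcal{O}^{*}\to 1,\qquad 0\to\mathbb{Z}\to\mathcal{C}\to\mathcal{C}^{*}\to 1,
\end{equation*}
yield long exact sequences in cohomology. Cartan's Theorem B forces $H^k(X,\mathcal{O})=0$ for $k\ge1$ because $X$ is Stein, while $\mathcal{C}$ is a fine sheaf so $H^k(X,\mathcal{C})=0$ for $k\ge1$ as well. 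Hence both $H^1(X,\mathcal{O}^{*})$ and $H^1(X,\mathcal{C}^{*})$ are canonically isomorphic to $H^2(X,\mathbb{Z})$, and naturality of the snake-lemma arrows makes the comparison map the identity on $H^2(X,\mathbb{Z})$. The same argument, applied coordinate-wise, settles $G=(\mathbb{C}^{*})^{m}\times\mathbb{C}^{n}$, and already takes care of tori.

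For a general complex Lie group $G$, I would follow Grauert's inductive scheme. Choose a smooth strictly plurisubharmonic exhaustion $\rho:X\to[0,\infty)$ and set $X_c:=\{\rho\le c\}$. The induction runs on $c$: one shows that a continuous cocycle representing a $G$-bundle on $X$ can be adjusted, within its continuous class and without modifying it below a slightly smaller sublevel, so as to become holomorphic on $X_{c+\epsilon}$. The inductive step rests on a local Oka-type approximation lemma, reducing via linearization to the vanishing of obstructions in the first cohomology of the adjoint coherent sheaf, which is guaranteed by Cartan's Theorem B on the Stein subset in question. A parallel argument settles injectivity, by running the same approximation scheme on a continuous trivialization between two holomorphic cocycles.

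The hard part will be the non-abelian approximation step, since $H^1(X,\mathcal{O}(G))$ is only a pointed set when $G$ is non-abelian, and so one cannot argue purely at the level of cohomology groups. One must work with cocycle representatives and balance the smallness needed for multiplicative perturbations against the requirement that the adjustment extend across a strictly larger Stein sublevel. For the application in this paper only the abelian case is invoked, and the exponential-sequence argument above yields $H^{1}(\mathbb{C},\mathcal{O}^{*})\cong H^{1}(\mathbb{C},\mathcal{C}^{*})\cong H^{2}(\mathbb{C},\mathbb{Z})=0$ directly, which is exactly the triviality used to identify $M^{\pm}\to\mathbb{C}$ with $\mathbb{C}\times\mathbb{C}^{*}$.
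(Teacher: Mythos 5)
The paper offers no proof of this statement at all: it is quoted as Theorem 8.2.1 of Forstneri\v{c}'s book and used as a black box, so the only question is whether your argument stands on its own. Your abelian case does. For $G=\mathbb{C}^*$, identifying the holomorphic and topological classification problems with the \v{C}ech sets $H^1(X,\mathcal{O}^*)$ and $H^1(X,\mathcal{C}^*)$, running the two exponential sequences, and using Cartan's Theorem B on the Stein manifold $X$ together with the fineness of $\mathcal{C}$ gives $H^1(X,\mathcal{O}^*)\cong H^2(X,\mathbb{Z})\cong H^1(X,\mathcal{C}^*)$, compatibly with the change-of-sheaves map by naturality of the connecting homomorphisms. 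That is a complete and correct proof of the only case this paper ever invokes ($G=\mathbb{C}^*$ over $X=\mathbb{C}$, where both sets vanish since $H^2(\mathbb{C},\mathbb{Z})=0$); indeed the paper could cite exactly this elementary argument in place of the full Oka--Grauert principle.

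For the theorem as stated, with an arbitrary complex Lie group $G$, your proposal has a genuine gap. The outline of Grauert's scheme (exhaustion by sublevel sets of a strictly plurisubharmonic function, bump-by-bump holomorphic correction of a continuous cocycle, linearization plus Theorem B for the adjoint sheaf) correctly names the architecture, but the decisive step is precisely the one you defer: the non-abelian splitting and approximation lemma that converts vanishing of the linearized obstruction into an actual multiplicative correction of $G$-valued cocycles, with estimates strong enough that the corrections converge as one crosses critical levels and exhausts $X$. When $G$ is non-abelian, $H^1(X,\mathcal{O}(G))$ is only a pointed set, so there is no cohomological shortcut, and this analytic core (Cartan pairs, splitting of cocycles, Runge-type approximation of $G$-valued maps) is the entire content of Grauert's theorem. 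Injectivity likewise requires a parametric, homotopy version of the same machinery rather than ``a parallel argument.'' As written, your general-case argument is a plan for a proof, not a proof; it would be honest either to restrict the statement to abelian $G$ (sufficient here) or to cite Grauert/Forstneri\v{c} for the general case, as the paper does.
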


Hence there exist biholomorphisms
\begin{align*}
  H^\pm:M^\pm&\to \C\times\C^*\\
  q&\mapsto (u,v)_\pm
\end{align*}
Under these identifications, the vector field $\xi$ is mapped to $v\frac{\p}{\p v}$ and the $2$-form $\Psi$ is mapped to $\frac{{\df}v\wedge {\df}u}{v}.$ The induced $\C^*$-action is therefore
\begin{align*}
  \lambda\cdot(u,v)=(u,\lambda v),\ \lambda\in\C^*.
\end{align*}

\subsection{The equivalence relation}
Now $M_0$ is biholomorphic to
\begin{align*}
  (\C\times\C^*)\amalg(\C\times\C^*)/\sim
\end{align*}
where $\sim$ denotes the equivalence 
\begin{align*}
  (u,v)_+\sim(u,f(u)v)_-
\end{align*}
for some holomorphic function $f:\C\setminus\{y_j+\ii z_j\}\to \C^*$. 
\begin{lem}\label{biholo}
For holomorphic $g:\C\to\C^*$, the two gluings defined by $f$ and $fg$ give biholomorphic complex manifolds. 
\end{lem}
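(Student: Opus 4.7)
The plan is to realize $f$ and $fg$ as cocycles for the same topological principal $\C^*$-bundle and exhibit an explicit bundle isomorphism that changes the transition function from $f$ to $fg$. Since $g:\C\to\C^*$ is holomorphic and nowhere vanishing \emph{on all of $\C$} (not only on the overlap $\C\setminus\{a_j\}$), it induces a genuine bundle automorphism of the $-$ chart $\C\times\C^*$, and this is precisely what makes the lemma nontrivial.

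Concretely, write $M_f$ and $M_{fg}$ for the manifolds obtained by gluing two copies of $\C\times\C^*$ via
\[
(u,v)_+ \sim_{f} (u,f(u)v)_-,\qquad (u,v)_+ \sim_{fg} (u,f(u)g(u)v)_-,
\]
respectively. I would define a map $\Phi:M_f\to M_{fg}$ chart by chart by
\[
\Phi|_{+}(u,v)=(u,v),\qquad \Phi|_{-}(u,v)=(u,g(u)v).
\]
Because $g$ is holomorphic and non-vanishing on \emph{all} of $\C$, the map $\Phi|_{-}$ is a biholomorphism of $\C\times\C^*$ with holomorphic inverse $(u,w)\mapsto(u,g(u)^{-1}w)$; $\Phi|_{+}$ is trivially a biholomorphism.

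Next I would check that $\Phi$ respects the two equivalence relations on the overlap. Starting from $(u,v)_+\sim_f(u,f(u)v)_-$, one computes
\[
\Phi\bigl((u,v)_+\bigr)=(u,v)_+^{fg},\qquad \Phi\bigl((u,f(u)v)_-\bigr)=(u,g(u)f(u)v)_-^{fg},
\]
and the right-hand sides are identified in $M_{fg}$ by the defining relation $\sim_{fg}$. Hence $\Phi$ descends to a well-defined holomorphic bijection $M_f\to M_{fg}$; the same construction with $g^{-1}$ in place of $g$ yields its inverse, so $\Phi$ is a biholomorphism.

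The step I expect to be the only delicate point is verifying that $\Phi|_{-}$ is defined on all of the $-$ chart, which uses exactly the hypothesis that $g$ is holomorphic on the full plane $\C$ with values in $\C^*$; if $g$ were only defined on $\C\setminus\{a_j\}$, the definition of $\Phi$ would fail to extend across the fibres over the fixed points. Everything else reduces to the routine cocycle identity $fg = f\cdot g$ on the overlap.
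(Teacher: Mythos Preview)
Your proof is correct and is essentially the same argument as the paper's: the paper defines changes of trivialization $\psi^+:(u,v)_+\mapsto(u,v)_+$ and $\psi^-:(u,v)_-\mapsto(u,g(u)v)_-$ and observes that the transition function becomes $fg$, which is exactly your $\Phi|_+$ and $\Phi|_-$. Your write-up is in fact more explicit than the paper's, and your remark that the argument hinges on $g$ being holomorphic and nonvanishing on all of $\C$ (so that $\Phi|_-$ is a biholomorphism of the full chart, not just of the overlap) is a point the paper leaves implicit.
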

\begin{proof}
Define holomorphic changes of trivialization
\begin{align*}
  \psi^+:(u,v)_+&\mapsto (u,v)_+,\\
  \psi^-:(u,v)_-&\mapsto (u,g(u)v)_-.
\end{align*}
Then the equivalence relation would be
\begin{align*}
  \psi^-\circ H^-\circ(H^+)^{-1}\circ(\psi^+)^{-1}=(u,f(u)g(u)v).
\end{align*}
Therefore, the complex structure can be determined only by behaviors of $f$ near the punctures.
\end{proof}

\begin{lem}\label{equi}
  On the overlap $M^+\cap M^-$, the equivalence relation can be given by
\begin{align*}
  (u,v)_+\sim\lc u,\frac{v}{P(u)} \rc_-
\end{align*}
where
\begin{align*}
  P(u)=u^{\delta}\prod_{a_j\ne 0}E_{j}\lc u/a_j \rc.
\end{align*}
and
\begin{align*}
\delta=
    \left\{
    \begin{aligned}
        0\ \ &\text{ if }0\notin\{a_j\},\\
        1\ \  &\text{ if }0\in\{a_j\}.
    \end{aligned}
    \right.
\end{align*}
\end{lem}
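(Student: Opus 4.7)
The plan is to combine Lemma~\ref{biholo} with a local analysis of the transition function $f$ near each puncture, and then invoke the Weierstrass factorization theorem.

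First I would analyse $f$ locally near each $a_j$. Because the Gibbons-Hawking metric extends smoothly across $q_j=\pi^{-1}(p_j)$ and the principal $S^1$-bundle $\pi_0$ has Chern number $-1$ on a small sphere enclosing $p_j$, the $S^1$-action linearises with weights $(1,-1)$. Accordingly one can choose holomorphic coordinates $(w_1,w_2)$ on a neighbourhood of $q_j$, with $q_j=(0,0)$, in which the $\C^*$-action on $M$ takes the form $\lambda\cdot(w_1,w_2)=(\lambda w_1,\lambda^{-1}w_2)$, and in which the invariant holomorphic function $H-a_j$ equals $w_1w_2$. The moment-map identity $x-x_j=\tfrac12(|w_1|^2-|w_2|^2)$ then shows that in this neighbourhood $M^+$ corresponds to $\{w_1\ne 0\}$ and $M^-$ to $\{w_2\ne 0\}$.

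Since $v_+$ is a nowhere-vanishing weight-$(+1)$ holomorphic function on $\{w_1\ne 0\}$, and $v_-$ is likewise on $\{w_2\ne 0\}$, the weight decomposition forces
\begin{equation*}
  v_+=w_1\,h_+(u-a_j),\qquad v_-=\frac{g_-(u-a_j)}{w_2},
\end{equation*}
with $h_+$ and $g_-$ holomorphic and nowhere vanishing near $0$. On the overlap $\{w_1\ne 0,\,w_2\ne 0\}$ this gives
\begin{equation*}
  f(u)=\frac{v_-}{v_+}=\frac{g_-(u-a_j)}{(u-a_j)\,h_+(u-a_j)},
\end{equation*}
so $f$ has a simple pole at each $a_j$ and is otherwise holomorphic and nowhere zero on $\C$. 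Equivalently, $1/f$ extends to an entire function on $\C$ whose zero set is exactly $\{a_j\}_{j=1}^\wq$, with every zero simple.

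Since $\{a_j\}$ has no accumulation point in $\C$, the Weierstrass factorization theorem guarantees that the product $P(u)=u^\delta\prod_{a_j\ne 0}E_j(u/a_j)$ converges locally uniformly and defines an entire function whose divisor agrees with that of $1/f$. Hence $P\cdot f$ is an entire and nowhere-vanishing function on $\C$, so $P\cdot f=e^h$ for some entire $h$. Choosing $g=e^{-h}:\C\to\C^*$ in Lemma~\ref{biholo} replaces $f$ by $f\cdot g=1/P$, giving exactly the equivalence $(u,v)_+\sim(u,v/P(u))_-$ claimed in the statement. The main technical obstacle is the first step: one must use the smooth extension of $M$ across each $q_j$ together with the Chern-class normalization $c_1=-1$ to obtain the standard linear $\C^*$-model; once this local picture is in hand, the simple pole of $f$ at each $a_j$ is forced by the weight decomposition, and the global conclusion is a routine application of Weierstrass' theorem and Lemma~\ref{biholo}.
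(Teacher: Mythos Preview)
Your overall strategy matches the paper's: show that the transition function $f$ has a simple pole at each $a_j$ and is otherwise holomorphic and nonvanishing, then apply Weierstrass factorization and Lemma~\ref{biholo}. The local analysis near each $q_j$, however, proceeds by a genuinely different route. The paper starts from the complex Morse lemma applied to $H$ to obtain coordinates with $H=a_j+w_1w_2$, then uses the holomorphic symplectic form $\Psi$ and the identity $\iota_\xi\Psi=dH$ to compute $\xi=\psi^{-1}(w_1\partial_{w_1}-w_2\partial_{w_2})$, and finally solves an explicit equation $(w_1\partial_{w_1}-w_2\partial_{w_2})\alpha=\psi-1$ to normalize $\psi\equiv1$; it then writes down explicit sections $\sigma_\pm$ and reads off $f_j(u)=\epsilon^2/(u-a_j)$. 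You instead invoke holomorphic linearization of the $\C^*$-action directly, read off the weights $(1,-1)$ from the Chern-class normalization, and use the weight decomposition of $v_\pm$ to extract the simple pole. Your route is cleaner and bypasses both the Morse lemma and the explicit PDE normalization, at the price of citing the $\C^*$-linearization theorem as a black box; the paper's route is more self-contained and makes the local coordinates entirely explicit. Two small points deserve tightening: the identity $x-x_j=\tfrac12(|w_1|^2-|w_2|^2)$ holds only to leading order, since the K\"ahler form is not the flat one in your linearizing chart---what you actually need is just the sign, which follows because the real part of $\xi$ increases $x$ while scaling $|w_1|$ upward; and arranging $H-a_j=w_1w_2$ exactly (rather than merely a function of $w_1w_2$) requires that $q_j$ be a \emph{nondegenerate} critical point of $H$, which the paper proves via $\Psi$ but which also follows from the fact that $(u,v_+)$ is a local biholomorphism on $M^+\cap U_j$.
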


We shall apply the following complex Morse Lemma, \cite[Proposition 3.15]{ebeling2007functions}
\begin{lem}
  Let $M$ be a $2$-dimensional complex manifold, $H:M\to\C$ a holomorphic function and $q$ a nondegenerate critical point of $H$. Then there is a local coordinate system $(w_1,w_2)$ in a neighbourhood $V$ of $q$ such that on $V$ we have
  \begin{align*}
    H(w_1,w_2)=H(q)+w_1w_2,\ \ w_1(q)=w_2(q)=0.
  \end{align*}
\end{lem}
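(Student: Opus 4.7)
The plan is to establish the complex Morse Lemma by reducing, in successive biholomorphic coordinate changes, an arbitrary holomorphic function with a nondegenerate critical point to the normal form $w_1w_2$. After translating to make $q=0$ and $H(q)=0$, the first task is to extract, from the vanishing of $H$ and $dH$ at the origin, a holomorphic symmetric bilinear representation of $H$.

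To do this I would invoke the holomorphic version of Hadamard's lemma: iterating the identity $f(z)=f(0)+\sum_i z_i\int_0^1\partial_if(tz)\,dt$ twice and using $H(0)=0$, $dH(0)=0$ yields a representation
\begin{align*}
  H(z_1,z_2)=\sum_{i,j=1}^2 b_{ij}(z)\,z_iz_j
\end{align*}
on a polydisc about the origin, where $b_{ij}$ are holomorphic, symmetric in $(i,j)$, and satisfy $2b_{ij}(0)=\partial_i\partial_j H(0)$. Let $B(z)=(b_{ij}(z))$. Since $q$ is a nondegenerate critical point, $B(0)$ is an invertible symmetric matrix. Over $\C$ every nondegenerate symmetric matrix is congruent to the identity, so a constant linear change of coordinates may be performed to assume $B(0)=I$. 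The representation $H(z)=z^{\top}B(z)z$ is preserved in form under this change.

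The next step is to produce a holomorphic square root. For $z$ in a sufficiently small neighbourhood of $0$, the matrix $B(z)-I$ has operator norm less than $1$, so the binomial series $C(z):=(I+(B(z)-I))^{1/2}=\sum_{k\ge 0}\binom{1/2}{k}(B(z)-I)^k$ converges to a holomorphic matrix-valued function satisfying $C(z)^2=B(z)$. Because $B(z)$ is symmetric and $C(z)$ is a polynomial in $B(z)-I$, $C(z)$ is also symmetric, hence $C(z)^{\top}C(z)=B(z)$. Defining $w(z):=C(z)z$, one computes
\begin{align*}
  w_1(z)^2+w_2(z)^2=w(z)^{\top}w(z)=z^{\top}C(z)^{\top}C(z)z=z^{\top}B(z)z=H(z).
\end{align*}
The Jacobian of $z\mapsto w(z)$ at the origin equals $C(0)=I$, so by the holomorphic inverse function theorem $(w_1,w_2)$ is a local holomorphic coordinate system near $q$ with $w_i(q)=0$.

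Finally, one performs the constant linear change $w_1=\tfrac{1}{2}(u_1+u_2)$, $w_2=\tfrac{1}{2i}(u_1-u_2)$, equivalently $u_1=w_1+iw_2$ and $u_2=w_1-iw_2$, which is a biholomorphism of $\C^2$ fixing the origin. Under it $w_1^2+w_2^2=u_1u_2$, so $(u_1,u_2)$ are the desired local coordinates with $H=H(q)+u_1u_2$ and $u_i(q)=0$. The main obstacle, and the only nontrivial analytic input, is the construction of the holomorphic matrix square root $C(z)$; once the preliminary normalization to $B(0)=I$ is made, the Neumann-type binomial series closes the argument and the remaining manipulations are linear algebra.
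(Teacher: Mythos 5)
Your proof is correct, but note that the paper itself contains no proof of this statement: the lemma is quoted as a black box from Ebeling \cite[Proposition 3.15]{ebeling2007functions} and then applied in the proof of Lemma~\ref{equi}, so there is no in-paper argument to compare against. Your argument is the standard self-contained proof of the holomorphic Morse lemma, and every step checks out: the holomorphic Hadamard lemma (valid on a polydisc, star-shaped about the origin, with holomorphy of $\int_0^1\partial_iH(tz)\,{\df}t$ in $z$) gives $H=z^{\top}B(z)z$ with $B$ holomorphic and symmetric and $2B(0)$ equal to the Hessian; nondegeneracy makes $B(0)$ invertible, and over $\C$ every invertible symmetric matrix is congruent to the identity, so the normalization $B(0)=I$ is legitimate; the binomial series $C(z)=\bigl(I+(B(z)-I)\bigr)^{1/2}$ converges uniformly near $0$ and is holomorphic, and $w(z)=C(z)z$ has Jacobian $C(0)=I$ at the origin, so the holomorphic inverse function theorem applies; finally $u_1=w_1+\ii w_2$, $u_2=w_1-\ii w_2$ converts $w_1^2+w_2^2$ into $u_1u_2$. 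Two small remarks. First, $C(z)$ is a convergent power series in $B(z)-I$ rather than a polynomial, but symmetry passes to the limit, so the identity $C(z)^{\top}C(z)=C(z)^2=B(z)$ that you need still holds. Second, your construction is not special to surfaces: it proves the normal form $H=H(q)+\sum_{i=1}^n w_i^2$ in any dimension $n$, and only the final linear substitution producing $u_1u_2$ uses $n=2$; this is mildly stronger than what the paper needs and matches how the result is usually stated in the literature the paper cites.
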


\begin{proof}
  [Proof of Lemma~\ref{equi}]
Now $H:M\to\C$ has nondegenerate critical points at the isolated points $q_j=\pi^{-1}(p_j)$ for $p_j=(x_j,y_j,z_j)$. Indeed, we have
\begin{align*}
  {\df}H_{p_j}=\iota_\xi\Psi|_{p_j}=0
\end{align*}
and the Hessian of $H$ at $q_j$ can be expressed as
\begin{align*}
  \na {\df}H(v,w)=\lc \na\iota_{\xi}\Psi(v,w) \rc=\lc \na_v\Psi(\xi,w) \rc-\Psi(\xi,\na_vw)=\Psi(\na_v\xi,w)
\end{align*}
Note that the $\C^*$-action generated by $\xi$ has isolated fixed points $\{p_j\}$, the linear map $\na\xi|_{q_j}:T_{q_j}M\to T_{q_j}M$ is invertible. As $\Psi$ is nondegenerate, it follows that the Hessian of $H$ is non-degenerate at each $q_j$.
By the Morse Lemma each $q_j$ has a neighbourhood $U_j$ with a holomorphic chart on which we could find complex coordinates $(w^j_1,w^j_2)$ such that
\begin{align*}
  &\vp^j=(w^j_1,w^j_2):U_j\to V_j\subset \C^2,\ \vp^j(q_j)=0,\\ 
  &H\circ(\vp^j)^{-1}(w^j_1,w^j_2)=y_j+\ii z_j+w^j_1w^j_2=a_j+w^j_1w^j_2.
\end{align*}
When no confusion can arise, we drop the index $j$ for brevity. On $(U_j,\vp^j)$, we have
\begin{align*}
  \Psi=\psi(w_1,w_2){\df}w_1\wedge {\df}w_2
\end{align*}
for some nonvanishing holomorphic function $\psi(w_1,w_2)$. Then $\xi=a\frac{\p}{\p w_1}+b\frac{\p}{\p w_2}$ can be determined by
\begin{align*}
  \iota_\xi\Psi={\df}H=w_1{\df}w_2+w_2{\df}w_1=\iota_\xi (\psi {\df}w_1\wedge {\df}w_2),
\end{align*}
which implies that
\begin{align*}
  \xi=\psi^{-1}(w_1,w_2)\lc w_1\frac{\p}{\p w_1}-w_2\frac{\p}{\p w_2} \rc.
\end{align*}

Since $\xi$ generates $\C^*$-action,
\begin{align*}
  \int_0^{2\pi}\psi(e^{\ii\theta}w_1,e^{-\ii\theta}w_2){\df}\theta=2\pi.
\end{align*}
The complex flow of $\xi$ with respect to $\lambda=s+it$
\begin{align*}
  &\frac{{\df}w_1}{{\df}\lambda}=\psi^{-1}w_1\\
  &\frac{{\df}w_2}{{\df}\lambda}=-\psi^{-1}w_2
\end{align*}
Integrate $\lambda$ along a closed circle around $0\in\C$,
\begin{align*}
  \oint {\df}\lambda=2\pi \ii=\oint\psi\frac{{\df}w_1}{w_1}=\ii\int_0^{2\pi} \psi(e^{\ii\theta}w_1,e^{-\ii\theta}w_2){\df}\theta.
\end{align*}
Consider the expansion of $\psi$
\begin{align*}
  \psi=\sum_{k,l=0}^\wq a_{kl}\cdot w_1^kw_2^l,\ a_{00}=1,\ a_{jj}=0,\ \ \forall\ j>0.
\end{align*}
One can solve the equation
\begin{align*}
  \lc w_1\frac{\p}{\p w_1}-w_2\frac{\p}{\p w_2} \rc\alpha=\psi-1
\end{align*}
by
\begin{align*}
  \alpha=\sum_{k\ne l}\frac{a_{kl}}{k-l}w_1^kw_2^l.
\end{align*}
Now set 
\begin{align*}
  s^j_1:=e^{\alpha}w_1^j,\ s^j_2:=e^{-\alpha}w_2^j.
\end{align*}
Then $(s^j_1,s^j_2)$ gives a chart around $q_j$ satisfying
\begin{align*}
  H=s_1^js_2^j+a_j,\ a_j=y_j+\ii z_j.
\end{align*}
and we have
\begin{align*}
  \xi=s_1\frac{\p}{\p s_1}-s_2\frac{\p}{\p s_2}.
\end{align*}
The flow generated by $\xi$ is given by
\begin{align*}
  \Phi_\lambda(s_1,s_2)=(e^{\lambda}s_1,e^{-\lambda}s_2),\ \lambda\in\C.
\end{align*}

Claim:
Near the critical point $q_j$ where $u=H\ne y_j+ \ii z_j$, the equivalence relation is 
\begin{align*}
  f_j(u)=\frac{\epsilon^2}{u-(y_j+\ii z_j)}=\frac{\epsilon^2}{u-a_j}
\end{align*}
where $\epsilon$ is a non-vanishing complex constant.
Under coordinate transformation, the equivalence function can always be represented by $f_j(u)g_j(u)$ where $g_j$ is a holomorphic non-vanishing function on $U_j$.

Indeed, we have
\begin{align*}
  U_j\cap M^+=\{s_1^js_2^j\ne0\}\sqcup(\{s_1^j=0, s_2^j\ne0\}\cap M^+)\sqcup(\{s_1^j\ne0,s_2^j=0\}\cap M^+),\\
  U_j\cap M^-=\{s_1^js_2^j\ne0\}\sqcup(\{s_1^j=0, s_2^j\ne0\}\cap M^-)\sqcup(\{s_1^j\ne0,s_2^j=0\}\cap M^-).
\end{align*}
Since $U_j\cap M^+\setminus \{s_1^js_2^j\ne0\}$ is $U_j\cap M^+\setminus\{u\ne a_j\}=U_j\cap\{u=a_j,\ x>x_j\}$ which contains only one orbit, it must be one of the two cases, either $\{s_1^j=0, s_2^j\ne0\}\cap M^+$ or $\{s_1^j\ne0,s_2^j=0\}\cap M^+$.

Without loss of generality, we may assume the former one. $(U_j\cap M^+)\cup (U_j\cap M^-)$ has been mapped onto $V_j\setminus\{0\}\subset\C^2$, then we can exactly pick standard section $\sigma_+$ for $U_j\cap M^+$ and $\sigma_-$ for $U_j\cap M^-$:
\begin{align*}
  &\sigma_+=(s_1(u),s_2(u))=(\epsilon,\frac{u-a_j}{\epsilon}),\\
  &\sigma_-=(s_1(u),s_2(u))=(\frac{u-a_j}{\epsilon},\epsilon),
\end{align*}
for some fixed non-zero constant $\epsilon\in \C$. The local trivializations for $H:M^{\pm}\to\C$ restricted on $U_j$ can be written as
\begin{align*}
  h_j^+:V_j^+ &\to M^+\cap U_j \\
       (u,v^+)& \mapsto (s_1,s_2)=\Phi_{\log v^+}(\epsilon,\frac{u-a_j}{\epsilon})=(\epsilon v^+,\frac{u-a_j}{\epsilon v^+}).
\end{align*}

\begin{align*}
  h_j^-:V_j^- &\to M^-\cap U_j \\
       (u,v^-)& \mapsto (s_1,s_2)=\Phi_{\log v^-}(\frac{u-a_j}{\epsilon},\epsilon)=(\frac{u-a_j}{\epsilon}v^-,\frac{\epsilon}{v^-}).
\end{align*}
where $V_j^+\subset (H(U_j)-\{a_j\})\times \C^*$, $V_j^-\subset (H(U_j)-\{a_j\})\times \C^*$ are two suitable open subsets.

And the equivalence relation is determined by
\begin{align*}
  (h_j^-)^{-1}\circ h_j^+(u,v^+)=(u,\frac{\epsilon^2}{u-a_j}v^+).
\end{align*}

Recall that under the generic assumption we have a countable set $\{a_j\}$ of pairwise distinct points of $\C$ without accumulation, and we define
\begin{align*}
  P(u)=u^{\delta}\prod_{a_j\ne 0}E_{j}\lc u/a_j \rc
\end{align*}
where
\begin{align*}
\delta=
    \left\{
    \begin{aligned}
        0\ \ &\text{ if }0\notin\{a_j\},\\
        1\ \  &\text{ if }0\in\{a_j\},
    \end{aligned}
    \right.
\end{align*}
and where $E_j$ are the standard Weierstrass factors. Then $P(u)$ is an entire function with simple zeros exactly at the points $a_j$.

If the equivalence relation of $M_0$ is $f$, then $f$ must have exactly simple poles at $a_j$ and no other singularities. Now $f$ and $\frac{1}{P}$ have the same zeros and poles of same multiplicities at $\{a_j\}$, then the quotient 
\begin{align*}
  g(u):=f(u)P(u)
\end{align*}
extends to an entire holomorphic function $g:\C\to\C^*$. From Lemma~\ref{biholo} and based on the discussions above, we have coordinates $(u,v_\pm)$ on $M^\pm$ and the equivalence relation is given by $\frac{1}{P(u)}$.

\end{proof}

We now complete the proof of our key Lemma.

\begin{proof}
  [Proof of Lemma~\ref{mbi}]
Consider the hypersurface
\begin{align*}
  \Sigma:=\{(u_1,u_2,u_3)\in\C^3:u_1u_2=P(u_3)\}.
\end{align*}
Suppose $\sigma:=\{(0,0,y_j+\ii z_j):j\in\N^+\}\subset\C^3$ and define
\begin{align*}
  \chi:M_0&\to\Sigma\setminus\sigma,\\
  (u,v_+)&\mapsto(u_1,u_2,u_3)=(\frac{P(u)}{v_+},v_+,u),\\
  (u,v_-)&\mapsto(u_1,u_2,u_3)=(\frac{1}{v_-},P(u)v_-,u).
\end{align*} 
Then $\chi(M^+)=\Sigma^+=\{u_2\ne0\}\cap \Sigma\setminus\sigma$, $\chi(M^-)=\Sigma^-=\{u_1\ne0\}\cap \Sigma\setminus\sigma$. $\chi$ is well-defined and holomorphic on $M_0$. 

Conversely, on $\Sigma\setminus\sigma$ we define
\begin{align*}
  (u_1,u_2,u_3)\mapsto
  \left\{
    \begin{aligned}
      &(u_3,u_2)_+\ &\text{ if }u_2\ne0,\\
      &\left(u_3,\frac{1}{u_1}\right)_-\ &\text{ if }u_1\ne0,
    \end{aligned}
  \right.
\end{align*}
and this map gives a holomorphic inverse to $\chi$. Hence $\chi$ is a biholomorphism from $M_0$ to $\Sigma\setminus\sigma$ and locally maps $U_j\setminus \{q_j\}$ to the intersection of $\Sigma\setminus\sigma$ and a neighbourhood of $(0,0,y_j+\ii z_j)$ in $\Sigma$.

By the following Hartogs's theorem \cite[Theorem 2.3.2]{hormander1973introduction}, $\chi$ extends uniquely to a biholomorphism from $M$ to $\Sigma$. This proves Lemma~\ref{mbi}
\end{proof}
\begin{thm}
  [Hartogs's extension theorem]
  For an open set $U\subset\C^n,\ n\ge2$, any holomorphic function on $U\setminus\{q\}$ which is a set punctured at one point $q\in U$ can be extended to a holomorphic function on $U$.
\end{thm}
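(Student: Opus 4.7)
The plan is to adapt the proof of Lemma~\ref{mbi} to the non-generic case $v \in \mathbb{S}^2 \setminus B$, and then incorporate the minimal resolution of the $A_{m_k(v)-1}$ singularities produced by repeated projections. Take $v = (1,0,0)$ without loss of generality and write $J = J_v$, $b_k = b_k(v)$, $m_k = m_k(v)$. By condition (1), $\{b_k\}$ has no accumulation point in $\C$; combined with condition (2) that each $m_k < \wq$, the Weierstrass factorization theorem produces an entire function
\[
P_v(u) := u^{m_0(v)} \prod_{b_k \ne 0} E_{p_k}\!\lc u/b_k \rc^{m_k}
\]
(for a suitable sequence of integers $p_k$) with a zero of order exactly $m_k$ at each $b_k$. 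A direct Jacobian computation shows $\mathrm{Sing}(\Sigma_v) = \{(0,0,b_k) : m_k \geq 2\}$, where $\Sigma_v := \{u_1 u_2 = P_v(u_3)\}$, and the local change of variable $w = (u_3 - b_k)\,g(u_3)^{1/m_k}$, with $g$ the non-vanishing entire factor of $P_v$ near $b_k$, puts each singularity in the standard form $u_1 u_2 = w^{m_k}$, identifying it as an $A_{m_k - 1}$ rational double point.

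The construction of the vector field $\xi$, the holomorphic $\C^*$-action, the holomorphic function $H: M \to \C$, and the Morse charts $(s_1^j, s_2^j)$ at each fixed point $q_j$ all carry over verbatim from Section~\ref{s3}. The new geometric feature is the orbit structure above each $b_k$ with $m_k \geq 2$: if $q_{j_1}, \ldots, q_{j_{m_k}}$ denote the $m_k$ fixed points on $\pi^{-1}(L_{b_k})$ (ordered by $x$-coordinate), then $\pi^{-1}(L_{b_k})$ decomposes into two non-compact tail orbits, $m_k - 1$ compact middle orbits connecting consecutive fixed points, and the $m_k$ fixed points themselves. The closure of each middle orbit together with its two bounding fixed points is a smooth $\CP^1$ meeting its neighbors transversally at the shared fixed points; the resulting chain of $m_k - 1$ such $\CP^1$'s is the topological model for the exceptional divisor of the minimal resolution of an $A_{m_k - 1}$ singularity.

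To build the biholomorphism $(M, J_v) \to \widetilde{\Sigma_v}$, we first cover the complement $M'$ of all fixed points and all middle orbits by two open sets $M^+, M^-$, redefined as the preimages under $\pi$ of the complements of the left-tail rays $L_{b_k} \cap \{x \leq x_{j_{m_k}}\}$ and right-tail rays $L_{b_k} \cap \{x \geq x_{j_1}\}$ respectively. Each $M^\pm$ is then a holomorphic principal $\C^*$-bundle over $\C$ and hence trivial by Oka-Grauert. Iterating the Morse chart analysis of Lemma~\ref{equi} along the chain of $m_k$ consecutive fixed points above $b_k$ shows that the transition function between $M^+$ and $M^-$ has a pole of order exactly $m_k$ at each $b_k$; after multiplying by a non-vanishing entire factor as in Lemma~\ref{biholo}, this transition function becomes $1/P_v(u)$. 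This yields a biholomorphism $M' \cong \Sigma_v \setminus \mathrm{Sing}(\Sigma_v)$ exactly as in Lemma~\ref{mbi}.

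The final step is to extend this biholomorphism across each chain of middle orbits and fixed points so as to land in the minimal resolution $\widetilde{\Sigma_v}$ rather than $\Sigma_v$ itself. Near each $A_{m_k - 1}$ singular point $(0,0,b_k)$, the minimal resolution admits $m_k$ standard Hirzebruch-Jung charts with explicit binomial transition maps between consecutive charts. On the $M$-side, the Morse charts $(s_1^{j_i}, s_2^{j_i})$ at two consecutive fixed points $q_{j_i}, q_{j_{i+1}}$ overlap on the middle orbit between them, providing local holomorphic coordinates on a neighborhood of each chain in $M$. The hard part will be to normalize these Morse coordinates so that their overlap maps coincide with the Hirzebruch-Jung transition maps and glue consistently with the biholomorphism $M' \cong \Sigma_v \setminus \mathrm{Sing}(\Sigma_v)$ of the preceding paragraph. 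Once this gluing is verified, Hartogs's extension theorem (applicable across each node, a single point in a two-dimensional complex manifold) produces the desired global biholomorphism $(M, J_v) \cong \widetilde{\Sigma_v}$.
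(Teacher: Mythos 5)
Your proposal does not prove the statement at hand. The statement is Hartogs's extension theorem itself: for an open set $U\subset\C^n$ with $n\ge 2$, every holomorphic function on $U\setminus\{q\}$ extends holomorphically across the puncture $q$. This is a classical removable-singularity result in several complex variables, which the paper does not prove but quotes from H\"ormander \cite[Theorem 2.3.2]{hormander1973introduction} as an ingredient in the extension step of Lemma~\ref{mbi}. What you have written instead is a proof sketch of Theorem~\ref{vmr} (the minimal-resolution realization of $(M,J_v)$ for non-generic directions $v$), which is an entirely different statement. Worse, your sketch \emph{invokes} Hartogs's extension theorem in its final step (``Once this gluing is verified, Hartogs's extension theorem \ldots produces the desired global biholomorphism''), so read as a proof of the statement in question it is circular: the theorem to be proved appears as a tool in its own proof.

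A genuine proof of the statement would be a short argument in local coordinates. Choose coordinates centered at $q$ and a closed polydisc $\overline{\Delta}^n=\{|z_1|\le r\}\times\cdots\times\{|z_n|\le r\}\subset U$. For $f$ holomorphic on $U\setminus\{q\}$ define
\begin{align*}
  F(z_1,z_2,\dots,z_n):=\frac{1}{2\pi \ii}\int_{|\zeta|=r}\frac{f(\zeta,z_2,\dots,z_n)}{\zeta-z_1}\,{\df}\zeta ,
\end{align*}
which is holomorphic on the open polydisc since the integrand depends holomorphically on all variables and the contour avoids the puncture. For any fixed $(z_2,\dots,z_n)\ne(0,\dots,0)$ the slice $z_1\mapsto f(z_1,z_2,\dots,z_n)$ is holomorphic on the full disc $\{|z_1|<r\}$, so Cauchy's formula gives $F=f$ on that slice; hence $F=f$ on the punctured polydisc, and $F$ is the desired extension (uniqueness follows from the identity theorem). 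Note that this is exactly where the hypothesis $n\ge2$ enters --- the set of slices missing the puncture is dense --- and the argument has nothing to do with the Gibbons--Hawking geometry, the $\C^*$-action, or the resolution of $A_{m_k-1}$ singularities that occupy your proposal.
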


\begin{rem}
Lemma~\ref{mbi} recovers, as a special case, the example of Anderson-Kronheimer-LeBrun \cite{anderson1989complete} in which $\{p_j\}_j=\{(0,0,z_j)\in\R^3\}_j$ with $\sum_{j=1}^\wq\frac{1}{|z_j|}<\wq$.

In this situation, $a_j=\ii z_j$ and the Weierstrass product reduces to
\begin{align*}
  P(u)=\prod_{j=1}^\wq\lc 1-\frac{u_3}{\ii z_j} \rc.
\end{align*}

So $(M,J)$ is biholomorphic to
\begin{align*}
  u_1u_2=\prod_{j=1}^\wq\lc 1-\frac{u_3}{z_j} \rc.
\end{align*} 
\end{rem}

\subsection{Transition to other directions in $\ms^2$}

So far we have worked with the complex structure $J=J_x:{\df}x\mapsto V^{-1}({\df} t+\theta),\ {\df}y\mapsto {\df}z$, for which the $\R^3$-projections of the $\C^*$-orbits are parallel to the $x$-axis. Recall that we have
\begin{align*}
  &J_y:{\df}y\mapsto V^{-1}({\df}t+\theta),\ {\df}z\mapsto {\df}x,\\
  &J_z:{\df}z\mapsto V^{-1}({\df}t+\theta),\ {\df}x\mapsto {\df}y.
\end{align*}
Together, these three complex structures satisfy the quaternionic identities
\begin{align*}
  J_x^2=J_y^2=J_z^2=-\id,\ J_xJ_y=-J_yJ_x=J_z.
\end{align*}
Then for each direction $v=(v_1,v_2,v_3)\in \ms^2\subset \R^3$, we obtain a compatible complex structure 
\begin{align*}
  J_v:=v_1J_x+v_2J_y+v_3J_z
\end{align*}
for which the $\R^3$-projections of the $\C^*$-orbits are parallel to $v$. The projection to the plane $v^\perp$ plays the role of $H$ in this setting.

For $v\in \ms^2$, recall $a_j(v)=\Pi_v(p_j)$ and we define the entire function
\begin{align*}
   P_v(u):=u^{\delta(v)}\prod_{a_j(v)\ne0}E_j\lc\frac{u}{a_j(v)}\rc.
\end{align*}
where 
\begin{align*}
\delta(v)=
    \left\{
    \begin{aligned}
        0\ \ &\text{ if }0\notin\{a_j(v)\},\\
        1\ \  &\text{ if }0\in\{a_j(v)\}.
    \end{aligned}
    \right.
\end{align*}

We now show that for almost every directions $v\in\ms^2$, the assumptions of Lemma~\ref{mbi} are satisfied and the previous construction applies. 

\begin{proof}[Proof of Theorem~\ref{almostc}]
  Without loss of generality, we may assume $p_j\ne(0,0,0)$ for all $j$, and set $v_j:=\frac{p_j}{||p_j||}\in \ms^2$. For $n\in\N$, define $A_{j,n}\subset \ms^2\subset\R^3$ by
  \begin{align*}
    A_{j,n}:=\left\{v\in \ms^2:\theta(v,v_j)\le \frac{n}{||p_j||}\right\},
  \end{align*}
  where $\theta(v,v_j)$ denotes the angle between $v$ and $v_j$. Geometrically, $A_{j,n}$ is a spherical cap of opening angle $\frac{n}{||p_j||}$. Its measure satisfies
  \begin{align*}
    m(A_{j,n})=2\pi\lc 1-\cos \frac{n}{||p_j||}\rc\le \pi\lc\frac{n}{||p_j||}\rc^2.
  \end{align*}
  For any fixed $n$ we have
  \begin{align*}
    \sum_{j=1}^\infty m(A_{j,n})\le(\pi n^2)\sum_{j=1}^\infty\frac{1}{||p_j||^2}
  \end{align*}
  Since
  \begin{align*}
    \sum_{j\ge2}\frac{1}{||p_1-p_j||}<\wq,
  \end{align*}
  we have
  \begin{align*}
    \sum_{j=1}^\infty\frac{1}{||p_j||^2}<\wq.
  \end{align*}

By the Borel-Cantelli lemma, for $A_n:=\{v\in \ms^2:\ v\in A_{j,n}\text{ for infinitely many }j\}=\limsup_{j\to\infty}A_{j,n}$,
  \begin{align*}
    m(A_n)=0.
  \end{align*}

Then the probability that $\Pi_v(p_j)$ has some accumulation point is
  \begin{align*}
    &\frac{m\lc\left\{ v\in \ms^2:\text{There exists }n<\infty,\ v\in \ms^2,\text{ such that }\#\{j:||\Pi_v(p_j)||\le n\}=\infty\right\}\rc}{m(\ms^2)}\\
    =&P\lc\text{There exists }n<\infty,\ v\in \ms^2,\text{ such that }\#\{j:||\Pi_v(p_j)||\le n\}=\infty\rc\\
    =&P\lc\text{There exists }n<\infty,\ v\in \ms^2,\text{ such that }\#\left\{j:\sin\theta\le\frac{n}{||p_j||}\right\}=\infty\rc\\
    \le &P\lc \text{There exists }n<\infty,\ v\in \ms^2,\text{ such that }\#\left\{j:\theta\le\frac{n}{||p_j||}\right\}=\infty \rc\\
    =&0.
  \end{align*}
  
Secondly, the set 
  \begin{align*}
    B:=\left\{ v\in \ms^2: v=\pm\frac{p_i-p_j}{||p_i-p_j||}\text{ for some }i,j\right\}
  \end{align*}
  is a countable set. Hence for every $v\in \ms^2\setminus B$, $\Pi_v$ is injective on $\{p_j\}$. 

Putting these facts together, we conclude that for almost every $v\in \ms^2$, the points $\{a_j(v)\}$ are pairwise distinct and have no accumulation point in $\C$. For such $v$, the conclusion of Lemma~\ref{mbi} holds with $H$ replaced by $\Pi_v$ and $J$ by $J_v$. Therefore, for such $v$, $(M,J_v)$ is biholomorphic to the hypersurface
\begin{align*}
  \{(u_1,u_2,u_3)\in \C^3:u_1u_2=P_v(u_3)\}\subset\C^3.
\end{align*}
This is exactly the statement of Theorem~\ref{almostc}.
\end{proof}

\section{Non-injective projection and the minimal resolution}\label{s4}
While the previous discussion focused on the case where $\Pi_v$ is injective from $\{p_j\}_j$ to their images, in this section we study the case where several punctures project to the same point in $\C$. In this case $M$ can no longer be described directly as a hypersurface in $\C^3$. Instead, under certain conditions we show that $M$ is realized as the minimal resolution of a singular hypersurface. As in Section~\ref{s3}, we first treat the case of a fixed projection direction along the $x$-axis, assuming that only finitely many punctures lie over each point of the image. Then, by replacing this fixed direction with an arbitrary direction $v\in\ms^2$ and working with the corresponding complex structure $J_v$, we obtain Theorem~\ref{vmr}.

Before turning to the general countable configuration, we first recall a simple model case in which exactly two fixed points lie in the same image of $H$. This example, due to Chen-Chen \cite{ChenChen+2019+259+284}, already illustrates how surface singularities of type $A_1$ arise in this setting and how they are resolved. It will serve as a concrete motivation for the general statement of Theorem~\ref{vmr}.

\begin{exmp}[Chen-Chen \cite{ChenChen+2019+259+284}]
Without loss of generality, assume that there are only two fixed points under the $S^1$ action: $q_1=\pi^{-1}(p_1),\ q_2=\pi^{-1}(p_2)$ where $p_1=(0,0,0)$ and $p_2=(1,0,0)$. In this situation, Chen-Chen \cite{ChenChen+2019+259+284} has shown that the resulting \hk\ manifold $M$ is biholomorphic to the minimal resolution of the hypersurface in $\C^3$ defined by $u_1u_2=u_3^2$.

Define 
\begin{align*}
  M_1&:=\pi^{-1}\lc\{u\ne0\}\cup\{u=0\text{ and }x<0\}\rc,\\
  M_2&:=\pi^{-1}\lc\{u\ne0\}\cup\{u=0\text{ and }0<x<1\}\rc,\\
  M_3&:=\pi^{-1}\lc\{u\ne0\}\cup\{u=0\text{ and }x>1\}\rc.
\end{align*}
Then $M_1\cup M_2\cup M_3$ is an open cover of $M_0:=M\setminus\{\pi^{-1}(\{p_1,p_2\})\}$ and each $M_j,\ j\in\{1,2,3\}$ is a holomorphic principal $\C^*$-bundle under $H:M_j\to\C$. $M_0$ is biholomorphic to
\begin{align*}
  (\C\times \C^*)_{M_1}\amalg (\C\times\C^*)_{M_2}\amalg (\C\times\C^*)_{M_3}/\sim
\end{align*}
where the equivalence relation $\sim$ is generated by
\begin{align*}
  (u,v)_{M_1}\sim(u,\frac{v}{u})_{M_2}\sim(u,\frac{v}{u^2})_{M_3}.
\end{align*}

Let $S$ be the hypersurface in $\C^3$ defined by the following equation
\begin{align*}
  u_1u_2=u_3^2,
\end{align*}
and let $Bl_0(S)$ denote the blowing-up of $S$ at point $(0,0,0)\in\C^3$:
\begin{align*}
  Bl_0(S)&=\{((u_1,u_2,u_3),[U_1,U_2,U_3]):u_jU_k=u_kU_j\ (1\le j<k\le3), U_1U_2=U_3^2\}\\
  &\subset S\times\bp^2.
\end{align*}
Remove the two points of the exceptional divisor over the origin by setting
\begin{align*}
  Bl_0(S)':=Bl_0(S)\setminus\{\lc(0,0,0),[1:0:0]\rc,\lc(0,0,0),[0:1:0]\rc\}.
\end{align*}
On each $M_j$ we choose local coordinates $(u,v_j)$ and define a map $\chi_0:M_0\to Bl_0(S)'$ by
\begin{align*}
  \chi_0|_{M_1}(u,v_1)&=\lc\lc \frac{u^2}{v_1},v_1,u\rc,\left[\lc \frac{u}{v_1} \rc^2:1:\frac{u}{v_1}\right] \rc,\\
  \chi_0|_{M_2}(u,v_2)&=\lc\lc\frac{u}{v_2},uv_2,u\rc,\left[\frac{1}{v_2}:v_2:1\right]\rc,\\
  \chi_0|_{M_3}(u,v_3)&=\lc\lc \frac{1}{v_3},u^2v_3,u \rc,\left[1:\lc uv_3\rc^2:u v_3\right]\rc.
\end{align*}
These expressions are compatible on overlaps, so $\chi_0$ is well-defined and holomorphic.

Conversely, consider the open subsets
\begin{align*}
  \Sigma_1:=\{u_2\ne0\}\cap Bl_0(S)',\\
  \Sigma_2:=\{U_3\ne0\}\cap Bl_0(S)',\\
  \Sigma_3:=\{u_1\ne0\}\cap Bl_0(S)'.
\end{align*}
Define
\begin{align*}
  &\Theta_0|_{\Sigma_1}\lc\lc u_1,u_2,u_3 \rc,\left[U_1:U_2:U_3\right]\rc=(u_3,u_2)_{M_1},\\
  &\Theta_0|_{\Sigma_2}\lc\lc u_1,u_2,u_3 \rc,\left[U_1:U_2:U_3\right]\rc=(u_3,\frac{U_2}{U_3})_{M_2},\\
  &\Theta_0|_{\Sigma_3}\lc\lc u_1,u_2,u_3 \rc,\left[U_1:U_2:U_3\right]\rc=(u_3,\frac{1}{u_1})_{M_3}.
\end{align*}
Again, one checks that $\Theta_0$ is well-defined and holomorphic, and that $\Theta_0$ and $\chi_0$ are inverse to each other. Hence we have the biholomorphism between $M_0$ and $Bl_0(S)'$. By the Hartogs's theorem, this biholomorphism extends across the two missing points to a biholomorphism $M\cong Bl_0(S)$. So in the two-centre case $(M,J)$ is exactly the minimal resolution of the singular surface $S$.
\end{exmp}

The two-centre model already exhibits the essential phenomenon that will reappear in the general situation. In the general case, several punctures may lie over each value of the projection. We now turn to this general configuration for a fixed projection direction.

Recall that $H:\R^3\to \C$, $H(x,y,z)=y+\ii z$, is the projection along $x$-axis and $a_j=H(p_j)=y_j+\ii z_j$. Let $\{b_1,b_2,\cdots\}$ be the set of distinct values of $\{a_j\}$. 
We shall state the main Lemma for the fixed direction along the $x$-axis and the corresponding complex structure $J=J_x$.

\begin{lem}\label{mr}
For countable distinct points $p_j\in\R^3,\ j\in\mathbb{N^+}$, suppose that
\begin{align*}
  \sum_{j=2}^\wq\frac{1}{\|p_1-p_j\|}<\wq.
\end{align*}
Assume that
\begin{itemize}
  \item [(1).] the set $\{b_k\}_{k=1}^\wq$ has no accumulation point on $\C$;
  \item [(2).] for each $k\ge1$, the following multiplicities are finite:
  \begin{align*}
    &m_0:=\#\{j\in\mathbb{N}^+:a_j=0\}<\wq,\\
    &m_k:=\#\{j\in\mathbb{N^+}:a_j=b_k\}<\infty.
  \end{align*}
\end{itemize}
Relabel the points $\{p_j\}_j$ as $\{p_{k,l}\}_{k,l}$ so that $H^{-1}(b_k)=\{p_{k,l}:1\le l\le m_k\}$ for all $k\in\N^+$. Using suitable Weierstrass factors, we define an entire function
\begin{align*}
  P(u):=u^{m_0}\prod_{k:b_k\ne0}\lc E_{\ell_k}\lc \frac{u}{b_k} \rc \rc^{m_k}
\end{align*}
where the positive integers $\ell_k$ are properly chosen so that the infinite product converges uniformly on every compact subset of $\C$.

Let $S\subset\C^3$ be the surface defined by
\begin{align*}
  u_1u_2=P(u_3).
\end{align*}
Then $(M,J)$ is biholomorphic to the minimal resolution of $S$. The singularity type is $(A_{m_1-1},\cdots,A_{m_k-1},\cdots)$ for each $k$ with $m_k\ge2$.
\end{lem}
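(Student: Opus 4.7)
The plan is to adapt Lemma~\ref{mbi} and the two-centre model to the setting where several punctures may share a common projection, producing singularities of type $A_{m_k-1}$ on $S : u_1 u_2 = P(u_3)$ above each $b_k$ with $m_k \ge 2$. I would first verify that $P$ is well-defined: since $\{b_k\}$ has no accumulation in $\C$, Weierstrass's factorization theorem furnishes exponents $\ell_k$ making the infinite product converge locally uniformly, yielding an entire function with a zero of exact order $m_k$ at each $b_k$.

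Next, I would cover $M_0 := M \setminus \bigcup_{k,l} \{q_{k,l}\}$ by charts encoding the choice of a local section near each line $L_{b_k}$. For each $k$, order the punctures above $b_k$ by increasing $x$-coordinate as $p_{k,1}, \ldots, p_{k,m_k}$, partitioning $L_{b_k}$ into $m_k + 1$ open segments $L_{b_k}^{(0)}, \ldots, L_{b_k}^{(m_k)}$. For each integer $i \ge 0$, set
\[
M^{(i)} := \pi^{-1}\!\Bigl(\{p : H(p) \notin \{b_k\}\} \,\cup\, \bigcup_k L_{b_k}^{(\min(i,\, m_k))}\Bigr).
\]
Each $H : M^{(i)} \to \C$ is a holomorphic principal $\C^*$-bundle, hence biholomorphic to $\C \times \C^*$ by Oka-Grauert. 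On the overlap $M^{(i)} \cap M^{(i+1)}$, which is the preimage of the generic set $\{u \notin \{b_k\}\}$, the Morse-theoretic analysis of Lemma~\ref{equi} shows that the transition multiplies the fibre coordinate by $\prod_{k : m_k \ge i+1} \frac{1}{u - b_k}$ up to a nonvanishing holomorphic factor; by Lemma~\ref{biholo} this factor may be absorbed using the Weierstrass factors $E_{\ell_k}(u/b_k)$, so that the cumulative transition from $M^{(0)}$ through all fixed points above each $b_k$ is exactly $1/P(u)$.

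I would then define a biholomorphism $\chi : M_0 \to \widetilde{S} \setminus E$, where $\widetilde{S}$ denotes the minimal resolution of $S$ and $E$ is the discrete set of distinguished points on the exceptional divisors corresponding to the fixed points $\{q_{k,l}\}$. On each $M^{(i)}$ the formulas for $\chi$ generalize those of the two-centre example, sending $M^{(i)}$ into the $i$-th standard affine chart of the resolution of the $A_{m_k - 1}$ singularity at $(0, 0, b_k)$, truncated whenever $i \ge m_k$. The transition functions on $M^{(i)} \cap M^{(j)}$ will then match the known transitions between affine charts of the iterated blow-up, so $\chi$ is holomorphic with holomorphic inverse. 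Finally, Hartogs's extension theorem, applied at each isolated point $q_{k,l}$ in $M$ and at the corresponding removed point in $\widetilde{S}$, extends $\chi$ to the desired biholomorphism $M \to \widetilde{S}$. The singularity type $A_{m_k - 1}$ then reads off directly from the vanishing order $m_k$ of $P$ at $b_k$.

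The main obstacle is the coordinate bookkeeping required in defining $\chi$: the countable family of charts $M^{(i)}$ must be matched simultaneously with the $m_k$-affine-chart structure of the minimal resolution above each of the countably many singular points of $S$, and the truncation $\min(i, m_k)$ must be shown to dovetail with the natural collapsing of resolution charts once $i$ exceeds $m_k$. Once this matching is verified, the remaining verifications (holomorphy, bijectivity, Hartogs extension) reduce to the local computations already carried out in Lemma~\ref{mbi} and the two-centre model.
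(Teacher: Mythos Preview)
Your approach is correct in outline but differs from the paper's in two substantive ways.

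First, the chart structure: you use a single-indexed family $M^{(i)}$ with the truncation $\min(i,m_k)$, whereas the paper uses $M^-$, $M^+$, and a doubly-indexed family $M_{k,l}$ for $1\le l\le m_k-1$, where each $M_{k,l}$ sees only one intermediate segment over a single $b_k$ and has base $\C\setminus\{b_{k'}:k'\ne k\}$. (A small inaccuracy in your description: $M^{(i)}\cap M^{(i+1)}$ is not just the generic set, since over any $b_k$ with $m_k\le i$ both charts pick the same rightmost segment; the transition there is trivially the identity, so your formula $\prod_{k:m_k\ge i+1}(u-b_k)^{-1}$ is still correct.)

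Second, and more significantly, the paper does \emph{not} construct the minimal resolution $\widetilde{S}$ and match $M$ to it. Instead it defines $\chi:M\to S$ directly into the singular surface by the explicit formulas
\[
\chi_0|_{M_{k,l}}(u,v_{k,l})=\Bigl(\tfrac{P(u)(u-b_k)^{-l}}{v_{k,l}},\,(u-b_k)^{l}v_{k,l},\,u\Bigr),
\]
identifies the fibre $\chi^{-1}(0,0,b_k)$ as a chain $E_{k,1}\cup\cdots\cup E_{k,m_k-1}$ of $\mathbb{P}^1$'s meeting transversally, and then invokes adjunction together with $K_M=0$ to obtain $E_{k,l}^2=-2$, so that no exceptional component is a $(-1)$-curve and the resolution is minimal. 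This sidesteps entirely the bookkeeping you flag as the main obstacle: there is no need to model $\widetilde{S}$ chart by chart or to verify that your truncation matches the affine charts of an iterated blow-up. Your route is more constructive and makes the identification with $\widetilde{S}$ explicit; the paper's route is shorter and exploits the hyperk\"ahler structure (triviality of $K_M$) to get minimality for free.
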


\begin{proof}

For $\{q_j\}_{j=1}^\wq=\{\pi^{-1}(p_j)\}_{j=1}^\wq$, the charts that we take to cover $M_0:=M\setminus\{q_j\}_{j=1}^\wq$ are
\begin{align*}
  M^-:&=\pi^{-1}\lc \left\{u\ne b_k\text{ for all }k\right\} \cup\bigcup_{k}\left\{u=b_k\text{ and }x<x_{k,1}\right\} \rc\\
  M^+:&=\pi^{-1}\lc \left\{u\ne b_k\text{ for all }k\right\} \cup\bigcup_{k}\left\{u=b_k\text{ and }x>x_{k,m_k}\right\} \rc\\
  M_{k,l}:&=\pi^{-1}\lc \{u\ne b_k\text{ for all }k\}\cup\{u=b_k\text{ and }x_{k,l}<x<x_{k,l+1}\} \rc
\end{align*}
for $1\le l\le m_k-1$ and $k\ge1$. Each chart is mapped by $H$ to an open subset of $\C$ and hence the base manifold is Stein. The $\C^*$-action makes each restriction $H:M_\alpha\to H(M_\alpha)$ a trivial holomorphic principal $\C^*$-bundle.

We introduce coordinates $(u,v_\alpha)$ on each $M_\alpha$ and describe the gluing in the form
\begin{align*}
  (u,v_\alpha)\sim \lc u,f_{\alpha\beta}(u)v_\beta\rc.
\end{align*}
The equivalence relation $f_{\alpha\beta}$ are expressed using integer-valued functions $J_r(\alpha)$:
\begin{align*}
  J_r(\alpha):=
  \left\{
   \begin{aligned}
    &0,\ &M_\alpha=M^-,\\
    &l,&M_\alpha=M_{r,l},\\
    &m_r,&M_\alpha=M^+,\\
    &0,&r\ne k\text{ on }M_\alpha=M_{k,l}.
   \end{aligned}
  \right.
\end{align*}
Then
\begin{align*}
  f_{\alpha\beta}(u)=u^{J_0(\alpha)-J_0(\beta)}\prod_{k:b_k\ne0}\lc E_{\ell_k}\lc\frac{u}{b_k}\rc\rc^{J_{k}(\alpha)-J_k(\beta)}.
\end{align*}

For the surface $S\subset C^3$ defined by $u_1u_2=P(u)$, consider the map $\chi_0:M_0\to S\setminus\{0,0,b_k\}_k$:
\begin{align*}
  \chi_0|_{M^-}(u,v_-)&=\lc \frac{P(u)}{v_-},v_-,u  \rc,\\
  \chi_0|_{M^+}(u,v_+)&=\lc \frac{1}{v^+},P(u)v_+,u \rc,\\
  \chi_0|_{M_{k,l}}(u,v_{k,l})&=\lc \frac{P(u)(u-b_k)^{-l}}{v_{k,l}},(u-b_k)^lv_{k,l},u \rc.
\end{align*}
These three families of formulas agree on overlaps, so $\chi_0$ is a well-defined, holomorphic map. It extends holomorphically over the deleted points by setting $\chi(q_{k,l})=(0,0,b_k)$, $q_{k,l}=\pi^{-1}(p_{k,l})$ and therefore defines a holomorphic map $\chi:M\to S$. 

For each $k$ with $m_k\ge2$ and $1\le l\le m_k-1$, consider the curve
\begin{align*}
  E_{k,l}:&=\wl{\{(u,v_{k,l})\in M_{k,l}:u=b_k\}},\ 1\le l\le m_k-1.
\end{align*}
By regarding $(u,v_{k,l})$ as $p_{k,l}$ when $v_{k,l}=0$ and as $p_{k,l+1}$ when $v_{k,l}=\wq$, we see that  each $E_{k,l}$ is biholomorphic to $\bp^1$ and their intersections satisfy
\begin{align*}
  E_{k_1,l_1}\cap E_{k_2,l_2}=
  \left\{
  \begin{aligned}
    &\{(u,v_{k,l_1})=(b_k,\wq)\}=\{(u,v_{k,l_2})=(b_k,0)\},&k_1=k_2=k,\ l_2=l_1+1,\\
    &\emptyset, &k_1\ne k_2\text{ or }|l_1-l_2|\ge2.
  \end{aligned}
  \right.
\end{align*}
For $k$ with $m_k\ge2$, set
\begin{align*}
  E_k:&=\bigcup_{l=1}^{m_k-1}E_{k,l}=\chi^{-1}(0,0,b_k)\subset M,\\
  E:&=\bigcup_{k:m_k\ge2} E_k.
\end{align*}
Then
\begin{align*}
  \chi^{-1}(0,0,b_k)=E_k,
\end{align*}
and the restriction $\chi|_{M\setminus E}:M\setminus E\to S\setminus\{(0,0,b_k)\}_{k}$ is a biholomorphism. Since $\chi$ is proper, it follows that $\chi:M\to S$ is a resolution of singularities of $S$. 

To compute the self-intersection of the exceptional curves, we apply the adjunction formula
\begin{align*}
    2g(E_{k,l})-2=\lc K_M+E_{k,l} \rc\cdot E_{k,l}.
\end{align*}
Here we have $g(E_{k,l})=0$ and $K_M\cdot E_{k,l}=0$ since $M$ is a \hk\ manifold and $K_M$ is trivial. Hence $E_{k,l}^2=-2$ for all $k,l$. Thus every component of the exceptional divisor has self-intersection $-2$, and no component over any $b_k$ can be further contracted. Therefore $\chi:M\to S$ is a minimal resolution. This proves the Lemma~\ref{mr}.
\end{proof}

Finally, recall that for a direction $v=(v_1,v_2,v_3)\in \ms^2\subset \R^3$ we may choose complex structure 
\begin{align*}
    J_v=v_1J_x+v_2J_y+v_3J_z
\end{align*}
so that the $\R^3$-projections of the $\C^*$-orbits are parallel to $v$. The projection $\Pi_v$ then plays the role of $H$ in this setting. For $v\in\ms^2$, recall $a_j(v)=\Pi_v(p_j)\in\C$ and denote the distinct values of $\{a_j(v)\}_j$ by $\{b_k(v)\}_k$ with multiplicities
\begin{align*}
  &m_0(v):=\#\{j\in\mathbb{N}^+:a_j(v)=0\},\\
  &m_k(v):=\#\{j\in\mathbb{N^+}:a_j(v)=b_k(v)\}.
\end{align*}

\begin{proof}
    [Proof of Theorem~\ref{vmr}]
    Under the hypothesis of Theorem~\ref{vmr}, $\{a_j(v)\}_j$ have no accumulation point in $\C$, and each multiplicity $m_k(v)$ is finite. We can define the corresponding entire function as
\begin{align*}
  P_v(u):=u^{m_0(v)}\prod_{k:b_k(v)\ne0}\lc E_{\ell_k(v)}\lc\frac{u}{b_k(v)}\rc\rc^{m_k(v)}
\end{align*}
where $\ell_k(v)$ are properly chosen so that the product converges uniformly on every compact subset of $\C$. For such a direction $v$, the above argument in Lemma~\ref{mr} applies verbatim to $(M,J_v)$ with $H$ replaced by $\Pi_v$ and $J$ replaced by $J_v$. Thus $(M,J_v)$ is biholomorphic to the minimal resolution of the hypersurface in $\C^3$ defined by 
\begin{align*}
  u_1u_2=P_v(u_3).
\end{align*}
The singularity type over $u_3=b_k$ is of type $A_{m_k(v)-1}$ for each $k$ with $m_k(v)\ge2$. This proves Theorem~\ref{vmr}.
\end{proof}

\begin{rem}
  Hattori's related work \cite{hattori2014holomorphic} studied \hk\ manifolds of type $A_\wq$ via the \hk\ quotient description and obtains a closely related classification. Our result is parallel to his in some sense.
\end{rem}

\section*{Acknowledgements} 
B.X. would like to express his sincere gratitude to Kota Hattori for his valuable and stimulating conversations during the course of this project. 
B.X. is supported in part by the Project of Stable Support for Youth Team in Basic Research Field, CAS (Grant No. YSBR-001) and NSFC (Grant No. 12271495).

\bibliographystyle{plain}
\bibliography{references}{}
\smallskip

{
\raggedright
WENXIN HE,  BIN XU \\
SCHOOL OF MATHEMATICAL SCIENCES\\
UNIVERSITY OF SCIENCE AND TECHNOLOGY OF CHINA \\
HEFEI 230026 CHINA\\
\texttt{huwenxian@mail.ustc.edu.cn\qquad bxu@ustc.edu.cn}
}

\end{document}